\documentclass[reqno]{amsart}
\usepackage{hyperref}
\newcommand{\N}{{\mathbb{N}}}
\newcommand{\R}{{\mathbb{R}}}
\newcommand{\M}{{\mathbb{M}}}
\begin{document}
\title[Euler-Lagrange]{Euler-Lagrange equation for a delay variational problem}
\author[Blot  Kon\'e]{Jo\"el Blot and Mamadou I. Kon\'e}
\address{Jo\"el Blot: Laboratoire SAMM UE 4543, \newline
Universit\'e Paris 1 Panth\'eon-Sorbonne, centre P.M.F.,\newline
90 rue de Tolbiac, 75634 Paris cedex 13, France.}
\email{blot@univ-paris1.fr}
\address{Mamadou I. Kon\'e: Laboratoire SAMM UE 4543, \newline
Universit\'e Paris 1 Panth\'eon-Sorbonne, centre P.M.F.,\newline
90 rue de Tolbiac, 75634 Paris cedex 13, France.}
\email{mamadou.kone@malix.univ-paris1.fr}
\date{March 28, 2017}
\begin{abstract}
We establish Euler-Lagrange equations for a problem of Calculus of Variations where the unknown variable contains a term of delay on a segment.
\end{abstract}
\maketitle
\numberwithin{equation}{section}
\newtheorem{theorem}{Theorem}[section]
\newtheorem{lemma}[theorem]{Lemma}
\newtheorem{example}[theorem]{Example}
\newtheorem{remark}[theorem]{Remark}
\newtheorem{definition}[theorem]{Definition}
\newtheorem{proposition}[theorem]{Proposition}
\noindent
{\bf Key words:} Euler-Lagrange equation, Delay functional differential equation.\\
{\bf MSC2010-AMS:}  49K99, 34K38.
\section{Introduction}
We consider the following problem of Calculus of Variations
\[
(P)
\left\{
\begin{array}{cl}
{\rm Minimize} & J(x) := \int_0^T F(t,x_t, x'(t)) dt\\
{\rm when} & x \in C^0([- r,T], \R^n) \\
\null & x_{\mid_{[0,T]}} \in C^1([0,T], \R^n)\\
\null & x_0 = \psi, x(T) = \zeta.
\end{array}
\right.
\]
where $r, T \in (0, + \infty)$, $r < T$, $F : [0,T] \times C^0([ -r, 0], \R^n)\times \R^n \rightarrow \R$ is a functional, $\psi \in C^0([ -r,0], \R^n)$, $\zeta \in R^n$, and $x_t(\theta) := x(t + \theta)$ when $\theta \in [ -r, 0]$ and $t \in [0,T]$. $C^0$ denotes the continuity and $C^1$ denotes the continuous differentiability.
\vskip1mm
The aim of this paper is to establish a first-order necessary condition of optimality for problem $(P)$ which is analogous to the Euler-Lagrange equation of the variational problem without delay.
Note that in other settings of delay variational problems, the question of the establishment of an Euler-lagrange equation was studied, for instance in \cite{HVL} (see references therein), \cite{Hu}, \cite{AB}.
\vskip1mm
Now we describe the contents of the paper. In Section 2, we specify the notation of various functions spaces, we introduce an operator to represent the dual space of $C^0([-r,0], \R^n)$ into a space of bounded variation functions (denoted by $\mathcal{R}_n$) and we establish properties on this operator. In Section 3, we state the main theorem of the paper (Theorem \ref{th31}) on the Euler-Lagrange equation. We provide comments on this theorem. In Section, we introduce function spaces and operators which are specific to the delayed functions and we establish several of their properties. In Section 5, we provide conditions to ensure the Fr\'echet differentiability of the criterion of $(P)$. Section 6 is devoted to the proof of the Theorem \ref{th31}
\section{Notation and recall}
When $X$ and $Y$ are real normed vector spaces, $\mathcal{L}(X,Y)$ is the space of the continuous linear mappings from $X$ into $Y$.When $\Lambda \in \mathcal{L}(X,Y)$, we use the writings $\Lambda \cdot x := \Lambda(x)$, $\langle \Lambda, x \rangle = \Lambda(x)$ when $Y = \R$, and we write the norm of linear continuous operators as  $\Vert \Lambda \Vert_{\mathcal{L}} := \sup \{ \Vert \Lambda \cdot x \Vert_Y : x \in X, \Vert x \Vert_X \leq 1 \}$. The topological dual space of $X$ is denoted by $X^* := \mathcal{L}(X, \R)$.
\vskip1mm
When $a < b$ are two real numbers, the space of the continuous functions from $[a,b]$ into $X$ is denoted by $C^0([a,b], X)$; its norm is $\Vert f \Vert_{ \infty, [a,b]} := \sup \{ \Vert f(t) \Vert_X : t \in [a,b] \}$.
\vskip1mm
When $E$ is a finite-dimensional normed vector space, and when $a<b$ are two real numbers, $BV([a,b], E)$ denotes the space of the bounded variation functions from $[a,b]$ into $E$. $NBV([a,b], E)$ denotes the space of the $g \in BV([a,b], E)$ which are left-continuous on $[a,b)$ and which satisfy $g(a)=0$. When $g \in BV([a,b], E)$, the total variation of $g$ is $V_a^b(g)$ which defined as the supremum of the non negative numbers $\sum_{i=0}^k \Vert f(t_i) - f(t_{i+1}) \Vert_E$ on the set of the finite lists $(t_i)_{0 \leq i \leq k+1}$ such that $a= t_0 < <... < t_{k+1} = b$. The norm on  $NBV([a,b], E)$ is $\Vert g \Vert_{BV} = V_a^b(g)$.
\vskip1mm
Denoting by $\mathcal{B}([a,b])$ the Borel $\sigma$-field of $[a,b]$, when $\gamma \in NBV([a,b], E)$, there exists an unique signed measure $\mu[ \gamma] : \mathcal{B}([a,b]) \rightarrow \R$ such that, for all $\alpha < \beta$ in $[a,b]$, $\mu[ \gamma]([\alpha, \beta)) = \gamma (\beta ) - \gamma (\alpha)$. Necessarily we have $\mu[ \gamma]([ \alpha, \beta]) = \gamma (\beta +) - \gamma( \alpha )$, and when $\beta = b$, $\gamma (b+) := \gamma (b)$.
The Lebesgue-Stieltjes integral build on $\gamma$ is defined by $\int_{\alpha}^{\beta} d \gamma (\theta) f( \theta) := \int_{[\alpha, \beta]} f(\theta) d \mu[\gamma](\theta)$ where $\alpha < \beta$ in $[a,b]$ and where $f$ is $\mu[\gamma]$-integrable. We also recall the useful inequality $\vert \int_a^b d \gamma( \theta) \varphi(\theta) \vert \leq V_a^b( \gamma) \Vert \varphi \Vert_{\infty, [a,b]}$.
\vskip1mm
We denote by $(e_k)_{1 \leq k \leq n}$ the canonical basis of $\R^n$ and by $(e_k^*)_{1 \leq k \leq n}$ its dual basis. When $g \in NBV([a,b], \R^{n*})$, $g(\theta) = \sum_{=1}^n g_k(\theta) e_k^*$, when $f : [a,b] \rightarrow \R^n$, $f(\theta) = \sum_{k=1}^n f^k(\theta) e_k$, where the $f^k$ are $\mu[g_k]$-integrable, we set
\begin{equation}\label{eq21}
\int_{\alpha}^{\beta} dg(\theta) \cdot f(\theta) = \sum_{k=1}^n \int_{\alpha}^{\beta} dg_k(\theta) f^k(\theta).
\end{equation}
The theorem of representation of F. Riesz of $C^0([-r, 0], \R)^*$ permits to define the operator
$$\mathcal{R}_1 :C^0([-r, 0], \R)^* \rightarrow NBV([-r, 0], \R)$$
by
\begin{equation}\label{eq22}
\langle {\ell}, \varphi \rangle = \int_{-r}^0 d \mathcal{R}_1({\ell})(\theta) \varphi(\theta).
\end{equation}
when ${\ell} \in C^0([-r, 0], \R)^*$ and $\varphi \in C^0([-r, 0], \R)$. \\
$\mathcal{R}_1$ is a topological linear isomorphism from $ C^0([-r, 0], \R)^*$ into $NBV([-r, 0], \R)$, and it is an isometry: $\Vert \mathcal{R}_1({\ell}) \Vert_{BV} = \Vert {\ell}\Vert_{ \mathcal{L}}$ when ${\ell} \in C^0([-r, 0], \R)^*$.
\vskip1mm
When $n \in \N$, $n \geq 2$, when $L \in C^0([-r, 0], \R^n)^*$, for all $k \in \{1,...,n \}$ we define ${\ell}_k \in C^0([-r, 0], \R)^*$ by setting 
\begin{equation}\label{eq23}
\langle {\ell}_k, \varphi \rangle := \langle L, \varphi e_k \rangle
\end{equation}
where $\varphi \in C^0([-r, 0], \R)$. We set
\begin{equation}\label{eq24}
\mathcal{R}_n(L) := \sum_{k=1}^n \mathcal{R}_1( {\ell}_k) e_k^*.
\end{equation}
This formula defines an operator
$$\mathcal{R}_n : C^0([-r, 0], \R^n)^* \rightarrow NBV([-r,0], \R^{n*}).$$
When $\phi = \sum_{k=1}^n \phi^k e_k \in C^0([-r,0], \R^n)$ we have 
\[
\begin{array}{ccl}
\langle L, \phi \rangle&=& \langle L,\sum_{k=1}^n \phi^k e_k \rangle = \sum_{k=1}^n \langle L,\phi^k e_k \rangle \\
\null & = & \sum_{k=1}^n \langle {\ell}_k, \phi^k \rangle =  \sum_{k=1}^n \int_{-r}^0 d \mathcal{R}_1({\ell}_k)(\theta) \phi^k(\theta)\\
\null & = & \sum_{k=1}^n \int_{-r}^0 d (\mathcal{R}_1({\ell}_k)(\theta) e_k^*) \cdot \phi(\theta)
\end{array}
\]
and using (\ref{eq24}) we obtain
\begin{equation}\label{eq25}
\langle L, \phi \rangle = \int_{-r}^0 d \mathcal{R}_n(L)(\theta) \cdot \phi( \theta).
\end{equation}
\begin{lemma}\label{lem21}$\mathcal{R}_n$ is a linear topological isomorphism from $C^0([-r,0], \R^n)^*$ onto $NBV([-r,0], \R^{n*})$.
\end{lemma}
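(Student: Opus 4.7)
The plan is to reduce everything to the known properties of $\mathcal{R}_1$ and to treat, in turn, linearity, bijectivity, and bicontinuity of $\mathcal{R}_n$.

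Linearity is immediate from (\ref{eq23})--(\ref{eq24}): for each fixed $k$ the assignment $L\mapsto {\ell}_k$ is linear, $\mathcal{R}_1$ is linear, so the sum in (\ref{eq24}) is linear in $L$. For injectivity, I would use uniqueness of the decomposition $g=\sum_k g_k e_k^*$ in $NBV([-r,0],\R^{n*})$: $\mathcal{R}_n(L)=0$ forces $\mathcal{R}_1({\ell}_k)=0$ for every $k$, hence ${\ell}_k=0$ by the isomorphism property of $\mathcal{R}_1$; since any $\phi\in C^0([-r,0],\R^n)$ decomposes as $\sum_k \phi^k e_k$ and $\langle L,\phi^k e_k\rangle=\langle {\ell}_k,\phi^k\rangle=0$, the linearity of $L$ then gives $L=0$.

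For surjectivity, given $g=\sum_k g_k e_k^*\in NBV([-r,0],\R^{n*})$, I would set ${\ell}_k:=\mathcal{R}_1^{-1}(g_k)$ and define $\langle L,\phi\rangle:=\sum_k \langle {\ell}_k,\phi^k\rangle$ for $\phi=\sum_k \phi^k e_k$. The inequality $|\int_{-r}^0 d\gamma(\theta)\,\varphi(\theta)|\leq V_{-r}^0(\gamma)\,\Vert\varphi\Vert_{\infty,[-r,0]}$ recalled just before (\ref{eq21}) bounds each term, so $L\in C^0([-r,0],\R^n)^*$. A direct computation using (\ref{eq22}) then shows that the ${\ell}_k$ recovered from this $L$ through (\ref{eq23}) coincide with the chosen ones, hence $\mathcal{R}_n(L)=\sum_k g_k e_k^*=g$.

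For bicontinuity, both spaces being Banach, the open mapping theorem reduces the task to proving continuity of $\mathcal{R}_n$ itself. From (\ref{eq23}), taking the supremum over $\Vert\varphi\Vert_{\infty,[-r,0]}\leq 1$, one obtains $\Vert {\ell}_k\Vert_{\mathcal{L}}\leq \Vert L\Vert_{\mathcal{L}}$; combined with the isometry $\Vert \mathcal{R}_1({\ell}_k)\Vert_{BV}=\Vert {\ell}_k\Vert_{\mathcal{L}}$ and the equivalence of norms on the finite-dimensional space $\R^{n*}$, this yields $\Vert \mathcal{R}_n(L)\Vert_{BV}\leq C_n\Vert L\Vert_{\mathcal{L}}$ for a constant $C_n$ depending only on $n$. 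I do not foresee a genuine obstacle; the only slightly delicate point is the verification in the surjectivity step that the constructed $L$ really has the prescribed ${\ell}_k$ as components in the sense of (\ref{eq23}), but this follows mechanically from the Riesz representation (\ref{eq22}) applied coordinate by coordinate.
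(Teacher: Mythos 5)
Your proof is correct. It rests on the same coordinatewise reduction to $\mathcal{R}_1$ as the paper's, but is organized in the opposite direction: you estimate $\mathcal{R}_n$ itself, getting $\Vert \ell_k\Vert_{\mathcal{L}}\leq\Vert L\Vert_{\mathcal{L}}$ from (\ref{eq23}) and then $\Vert \mathcal{R}_n(L)\Vert_{BV}\leq n\Vert L\Vert_{\mathcal{L}}$ via the isometry of $\mathcal{R}_1$, and you invoke the open mapping theorem only to obtain continuity of the inverse. The paper instead constructs the inverse map explicitly, $\mathfrak{L}(g):=L^g$ with $\langle L^g,\phi\rangle=\int_{-r}^0 dg(\theta)\cdot\phi(\theta)$, proves the bound $\Vert\mathfrak{L}(g)\Vert_{\mathcal{L}}\leq n\Vert g\Vert_{BV}$ from the variation inequality $\vert\int_{-r}^0 d\gamma(\theta)\varphi(\theta)\vert\leq V_{-r}^0(\gamma)\Vert\varphi\Vert_{\infty,[-r,0]}$, shows $\mathfrak{L}$ is bijective, and applies the Banach inverse mapping theorem to conclude that $\mathcal{R}_n=\mathfrak{L}^{-1}$ is continuous; your surjectivity step is, in substance, exactly this construction of $\mathfrak{L}$ together with the verification $\mathcal{R}_n\circ\mathfrak{L}=\mathrm{id}$, which you rightly flag as the one point needing a mechanical check via (\ref{eq22}). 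A small observation: your direct estimate on $\mathcal{R}_n$ and the paper's estimate on $\mathfrak{L}$ are complementary, and having both in hand would give the two-sided bound $\frac{1}{n}\Vert \mathcal{R}_n(L)\Vert_{BV}\leq\Vert L\Vert_{\mathcal{L}}\leq n\Vert \mathcal{R}_n(L)\Vert_{BV}$ and render the open mapping theorem dispensable; as written, each version needs it once, and both are sound.
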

\vskip1mm
\begin{proof}
$\R^n$ is endowed by the norm $\Vert \sum_{k=1}^n u^k e_k \Vert := \max_{1 \leq k \leq n} \vert u^k \vert$, and  $\R^{n*}$ is endowed by the norm $\Vert \sum_{k=1}^n p_k e_k^* \Vert := \sum_{k=1}^n \vert p_k \vert$.
\vskip1mm
\noindent
Let $g \in NBV([-r, 0], \R^{n*})$, $g(\theta) = \sum_{k=1}^n g_k(\theta) e_k^*$, with $g_k \in NBV([-r, 0], \R)$. We define the linear functional
$$L^g : C^0([-r,0], \R^n) \rightarrow \R$$
$$\langle L^g, \phi \rangle := \sum_{k=1}^n \int_{-r}^0 dg_k(\theta) \phi^k(\theta) =: \int_{-r}^0 dg (\theta) \cdot \phi(\theta)$$
where $\phi \in  C^0([-r,0], \R^n)$, $\phi(\theta) = \sum_{k=1}^n \phi^k(\theta) e_k$.
\vskip1mm
\noindent
Since $V_{-r}^0(g_k) \leq V_{-r}^0(g)$ for all $k \in \{1,...,n \}$, for all $\phi \in  C^0([-r,0], \R^n)$, we have $\vert \langle L^g, \phi \rangle \vert \leq \sum_{k=1}^n \vert \int_{-r}^0 dg_k(\theta) \phi^k(\theta)\vert \leq \sum_{k=1}^n( \Vert g_k \Vert_{BV} \Vert \phi^k \Vert_{\infty, [-r,0]}$ which implies the following inequality
\begin{equation}\label{eq26}
\vert \langle L^g, \phi \rangle \vert \leq n \Vert g \Vert_{BV} \Vert \phi \Vert_{\infty, [-r,0]}.
\end{equation}
This inequality proves that $L^g \in \mathcal{L}( C^0([-r,0], \R^n), \R)$
\vskip1mm
Hence we can build the linear operator 
$$\mathfrak{L} : NBV([-r, 0], \R^{n*})\rightarrow   C^0([-r,0], \R^n)^*, \;\;\; \mathfrak{L}(g) := L^g.$$
From (\ref{eq26}) we have $\Vert  \mathfrak{L}(g) \Vert_{\mathcal{L}} \leq n \Vert g \Vert_{BV}$ which implies the continuity of $\mathfrak{L}$.
\vskip1mm
\noindent
When $\mathfrak{L}(g) = 0$, for all $\varphi \in C^0([-r,0], \R)$, taking $\phi = \varphi e_k$, we obtain that $\int_{-r}^0 dg_k(\theta) \varphi (\theta) = 0$, therefore $\mathcal{R}_1^{-1}(g_k) = 0$, and since $\mathcal{R}_1$ is a linear isomorphism, we obtain $g_k = 0$ for all $k \in \{1,...,n \}$, hence $g=0$. We have proven that $\mathfrak{L}$ is injective.
\vskip1mm
\noindent
When $L \in   C^0([-r,0], \R^n)^*$, setting $g(\theta) := \sum_{k=1}^n \mathcal{R}_1^{-1}({\ell}_k)(\theta) e_k^*$, we verify that $g \in NBV([-r, 0], \R^{n*})$and that $L = \mathfrak{L}(g)$, and so we have proven that $\mathfrak{L}$ is surjective. Hence $\mathfrak{L}$ is linear bijective and continuous. Using the Inverse mapping Theorem of Banach, we obtain that $\mathfrak{L}^{-1}$ is continuous, and since $\mathcal{R}_n = \mathfrak{L}^{-1}$, we obtain the announced result.
\end{proof}
\vskip1mm
Now we consider a case with a dependence with respect to the time.
\vskip1mm
\begin{theorem}\label{th22} Let $[t \mapsto L(t)] \in C^0([0,T],   C^0([-r,0], \R^n)^*)$. Then the following assertions hold.
\begin{itemize}
\item[(i)] $[t \mapsto \mathcal{R}_n(L(t))] \in C^0([0,T], NBV([-r, 0], \R^{n*}))$
\item[(ii)] $[(t, \theta) \mapsto \mathcal{R}_n(L(t))(\theta)]$ is Lebesgue measurable on $[0,T] \times [-r,0]$
\item[(iii)] $[(t, \theta) \mapsto \mathcal{R}_n(L(t))(\theta)]$ is Riemann integrable on $[0,T] \times [-r,0]$.
\end{itemize}
\end{theorem}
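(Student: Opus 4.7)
The plan is to leverage Lemma \ref{lem21} and the structure of $NBV$ functions; item (i) is essentially immediate, and the two others follow from a single uniform-continuity observation combined with the countability of the jump set of an $NBV$ function.

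For (i), Lemma \ref{lem21} says that $\mathcal{R}_n$ is a topological linear isomorphism, in particular continuous, so $t \mapsto \mathcal{R}_n(L(t))$ is the composition of two continuous maps and lies in $C^0([0,T], NBV([-r,0], \R^{n*}))$. The key quantitative consequence is the following. Every $h \in NBV([-r,0], \R^{n*})$ satisfies $h(-r) = 0$, hence $\|h(\theta)\|_{\R^{n*}} = \|h(\theta) - h(-r)\|_{\R^{n*}} \leq V_{-r}^{\theta}(h) \leq \|h\|_{BV}$. Applied to $h = \mathcal{R}_n(L(t)) - \mathcal{R}_n(L(s))$, together with the uniform continuity of $t \mapsto \mathcal{R}_n(L(t))$ on the compact $[0,T]$, this gives
\[
\sup_{\theta \in [-r,0]} \|\mathcal{R}_n(L(t))(\theta) - \mathcal{R}_n(L(s))(\theta)\|_{\R^{n*}} \longrightarrow 0 \quad \text{as } |t-s| \to 0,
\]
uniformly in $t$. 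Writing $g(t,\theta) := \mathcal{R}_n(L(t))(\theta)$, this inequality is the workhorse for (ii) and (iii).

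For (ii), I will approximate $g$ by step functions in $t$. Partition $[0,T]$ into the intervals $I_j^N = [jT/N, (j+1)T/N)$ (with $T$ adjoined to $I_{N-1}^N$) and define $g_N(t,\theta) := \mathcal{R}_n(L(jT/N))(\theta)$ on $I_j^N \times [-r,0]$. Each $g_N$ is jointly Borel measurable: the factor $\mathbf{1}_{I_j^N}(t)$ is Borel in $t$, and each fixed $\mathcal{R}_n(L(jT/N)) \in NBV([-r,0], \R^{n*})$ is Borel measurable in $\theta$ componentwise, because $NBV$ functions are differences of monotone functions. The uniform estimate above yields $g_N \to g$ uniformly on $[0,T] \times [-r,0]$, so $g$ is jointly Borel and a fortiori Lebesgue measurable. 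The obstacle here is genuine: separate continuity in $t$ and separate Borel measurability in $\theta$ do not in general give joint measurability, and the step-function approximation extracted from (i) is what unblocks this.

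For (iii), I will apply Lebesgue's criterion on the compact rectangle $[0,T] \times [-r,0]$. Boundedness of $g$ follows from the key observation together with the continuity of $\mathcal{R}_n$:
\[
\|g(t,\theta)\|_{\R^{n*}} \leq \|\mathcal{R}_n(L(t))\|_{BV} \leq \|\mathcal{R}_n\|_{\mathcal{L}} \sup_{s \in [0,T]} \|L(s)\|_{\mathcal{L}} < \infty.
\]
It remains to show that the discontinuity set $D$ of $g$ has planar Lebesgue measure zero. If $g(t_0,\cdot)$ is continuous at $\theta_0$, then the bound
\[
\|g(t,\theta) - g(t_0,\theta_0)\|_{\R^{n*}} \leq \|g(t,\theta) - g(t_0,\theta)\|_{\R^{n*}} + \|g(t_0,\theta) - g(t_0,\theta_0)\|_{\R^{n*}},
\]
together with the key observation (first summand tends to $0$ as $t \to t_0$ uniformly in $\theta$) and the assumed continuity (second summand tends to $0$ as $\theta \to \theta_0$), shows that $g$ is jointly continuous at $(t_0,\theta_0)$. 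Consequently, for every $t \in [0,T]$, the section $D^t := \{\theta : (t,\theta) \in D\}$ is contained in the jump set of the $NBV$-function $g(t,\cdot)$, which is at most countable and therefore of one-dimensional Lebesgue measure zero. Since $D$ is Borel (as the complement of the $G_\delta$ set of continuity points of $g$), Fubini/Tonelli gives $m_2(D) = \int_0^T m_1(D^t)\,dt = 0$, which together with boundedness yields Riemann integrability by Lebesgue's criterion.
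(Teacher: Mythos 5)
Your proof is correct, and it is genuinely more self-contained than the paper's: for assertions (ii) and (iii) the paper simply invokes Theorem 4.1 of \cite{BK} (stated there for $\M_n(\R)$-valued $NBV$ functions) and declares the modifications clear, whereas you supply a complete in-place argument. The engine of your proof is the elementary inequality $\Vert h \Vert_{\infty,[-r,0]} \leq \Vert h \Vert_{BV}$, valid for $h \in NBV([-r,0],\R^{n*})$ because $h(-r)=0$; it upgrades the $BV$-norm continuity of $t \mapsto \mathcal{R}_n(L(t))$ from (i) to continuity in $t$ that is uniform in $\theta$. Measurability then follows from the step-function approximation in $t$ (each approximant being a finite sum of products of an interval indicator in $t$ with a fixed $BV$, hence Borel, function of $\theta$), and Riemann integrability from Lebesgue's criterion once you observe that any discontinuity of $g = [(t,\theta) \mapsto \mathcal{R}_n(L(t))(\theta)]$ at $(t_0,\theta_0)$ forces a discontinuity of the slice $g(t_0,\cdot)$ at $\theta_0$, whose jump set is at most countable; the Tonelli step is legitimate because the discontinuity set, being the complement of a $G_\delta$, is Borel. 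All of these steps check out, including the componentwise reduction to real-valued $BV$ functions and the boundedness estimate via $\Vert \mathcal{R}_n \Vert_{\mathcal{L}}$. One small quibble: your aside that separate continuity in $t$ and separate Borel measurability in $\theta$ ``do not in general give joint measurability'' is not accurate --- the Carath\'eodory-type condition (continuity in one variable, measurability in the other, metric-space-valued) does imply joint measurability, and its standard proof is precisely the step-function approximation you carry out; what genuinely fails is joint measurability from separate measurability in \emph{both} variables. This does not affect your argument, which establishes the stronger uniform convergence in any case.
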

\vskip1mm
\noindent
\begin{proof} Assertion (i) is a straightforward consequence of Lemma \ref{lem21}. Assertions (ii) and (iii) are is proven in \cite{BK} (Theorem 4.1) in the case where $NBV([ -r,0], \M_n(\R))$ is the space of the functions in $BV([ -r,0], \M_n(\R))$ ($\M_n(\R)$ being the space of the real $n \times n$ matrices) which are left-continuous on $(0,T]$ and equal to 0 at $T$. The modifications to do to adapt the proof to the case of the present paper are clear.
\end{proof} 
\vskip1mm
We need the two following results to study the Nemytskii (or superposition) operators.
\begin{lemma}\label{lem23}
Let $\mathcal{E}$, $\mathcal{F}$ be two metric spaces, and $\Phi \in C^0(\mathcal{E}, \mathcal{F})$. $\mathcal{P}_c(\mathcal{E})$ denotes the set of the compacts subsets of $\mathcal{E}$. Then we have:\\
$\forall K \in \mathcal{P}_c(\mathcal{E})$, $\forall \epsilon > 0$, $\exists \delta^{\epsilon} > 0$, $\forall x \in K$, $\forall z \in \mathcal{E}$, 
$d(x,z) \leq \delta^{\epsilon} \Longrightarrow d(\Phi(x), \Phi(z)) \leq \epsilon$.
\end{lemma}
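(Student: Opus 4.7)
The plan is to perform a standard compactness argument. The subtlety of the statement is that $z$ is not required to lie in $K$, so this is not exactly the Heine--Cantor uniform continuity theorem; only $x$ ranges over the compact set. The key is therefore to use a \emph{factor of two} in the radius of the covering balls so that any $z$ close to some $x \in K$ still lies in a larger ball around a center of the finite subcover.

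More precisely, fix $K \in \mathcal{P}_c(\mathcal{E})$ and $\epsilon > 0$. First I would use continuity of $\Phi$ at each point $x \in K$ to obtain $\eta_x > 0$ such that $d(x,y) \leq 2 \eta_x$ implies $d(\Phi(x), \Phi(y)) \leq \epsilon / 2$. The open balls $B(x, \eta_x)$ for $x \in K$ form an open cover of $K$; by compactness I would extract a finite subcover $B(x_1, \eta_{x_1}), \dots, B(x_N, \eta_{x_N})$, and set
\[
\delta^{\epsilon} := \min_{1 \leq i \leq N} \eta_{x_i} > 0.
\]

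Next I would verify that this $\delta^{\epsilon}$ works. Given any $x \in K$, there is some index $i$ with $d(x, x_i) < \eta_{x_i}$, so that in particular $d(x,x_i) \leq 2\eta_{x_i}$ and hence $d(\Phi(x), \Phi(x_i)) \leq \epsilon/2$. If $z \in \mathcal{E}$ satisfies $d(x,z) \leq \delta^{\epsilon}$, then by the triangle inequality
\[
d(z, x_i) \leq d(z,x) + d(x, x_i) \leq \delta^{\epsilon} + \eta_{x_i} \leq 2 \eta_{x_i},
\]
so also $d(\Phi(z), \Phi(x_i)) \leq \epsilon / 2$. Another application of the triangle inequality in $\mathcal{F}$ then yields $d(\Phi(x), \Phi(z)) \leq \epsilon$.

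I do not anticipate any genuine obstacle, since this is a routine compactness/covering argument; the only point that requires care is the factor of two in the definition of $\eta_x$, which is precisely what allows us to control $\Phi(z)$ for points $z$ outside $K$ but close to $K$.
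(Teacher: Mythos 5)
Your proof is correct and complete. Note that the paper does not actually prove Lemma \ref{lem23}: it simply cites Schwartz, \emph{Cours d'analyse}, p.~355, so there is no in-paper argument to compare against. Your covering argument is the standard one, and you correctly identify and handle the only delicate point, namely that $z$ is allowed to range over all of $\mathcal{E}$ rather than over $K$, which is exactly what the factor of two in the choice of the radii $\eta_x$ (together with the triangle inequality through the center $x_i$) takes care of.
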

This result is established in \cite{Sc}, p. 355. It permits to compensate the lack for compact neighborhood of compact subset in non locally compact metric spaces, for instance in infinite-dimensional normed spaces. In \cite{BCr} and in \cite{BCNP} we have called it "Lemma of Heine-Schwartz".
\begin{lemma}\label{lem24}
Let $\mathcal{E}$, $\mathcal{F}$ be two metric spaces, $A$ be a nonempty compact metric space, and $\Phi : A \times \mathcal{E} \rightarrow \mathcal{F}$ be a mapping. Then the two following assertions are equivalent.
\begin{itemize}
\item[(i)] $\Phi \in C^0(A \times  \mathcal{E}, \mathcal{F})$.
\item[(ii)] $N_{\Phi} \in C^0(C^0(A, \mathcal{E}), C^0(A, \mathcal{F}))$ where $N_{\Phi}(u) := [ a \mapsto \Phi(a, u(a))]$.
\end{itemize}
\end{lemma}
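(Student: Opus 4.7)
The plan is to prove both implications separately, using Lemma \ref{lem23} for the harder direction.

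For $(i) \Rightarrow (ii)$, I would first check that $N_\Phi$ takes values in $C^0(A, \mathcal{F})$: given $u \in C^0(A, \mathcal{E})$, the map $a \mapsto (a, u(a))$ is continuous from $A$ into $A \times \mathcal{E}$, and composing with the continuous $\Phi$ gives continuity of $N_\Phi(u)$. For continuity of $N_\Phi$ at a fixed $u \in C^0(A, \mathcal{E})$, fix $\epsilon > 0$ and consider the compact set $K := \{(a, u(a)) : a \in A\} \subset A \times \mathcal{E}$ (image of $A$ under a continuous map). Applying Lemma \ref{lem23} to $\Phi$ and $K$, there exists $\delta > 0$ such that for every $(a, u(a)) \in K$ and every $(b, z) \in A \times \mathcal{E}$ with $d_{A \times \mathcal{E}}((a, u(a)), (b, z)) \leq \delta$ we have $d_{\mathcal{F}}(\Phi(a, u(a)), \Phi(b, z)) \leq \epsilon$. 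Specializing to $b = a$: if $v \in C^0(A, \mathcal{E})$ satisfies $\sup_{a \in A} d_{\mathcal{E}}(u(a), v(a)) \leq \delta$, then for every $a$ the pair $(a, v(a))$ is $\delta$-close to $(a, u(a)) \in K$, hence $d_{\mathcal{F}}(N_\Phi(u)(a), N_\Phi(v)(a)) \leq \epsilon$. Taking the supremum over $a$ yields continuity of $N_\Phi$ at $u$.

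For $(ii) \Rightarrow (i)$, I would use the trick of constant functions. Fix $x_0 \in \mathcal{E}$ and let $u_{x_0} \in C^0(A, \mathcal{E})$ denote the constant map with value $x_0$; then by (ii), $N_\Phi(u_{x_0}) = [a \mapsto \Phi(a, x_0)]$ lies in $C^0(A, \mathcal{F})$, so $\Phi$ is separately continuous in the first variable. To get joint continuity at an arbitrary point $(a_0, x_0) \in A \times \mathcal{E}$, take a sequence $(a_n, x_n) \to (a_0, x_0)$. The constant maps $u_{x_n}$ converge to $u_{x_0}$ in $C^0(A, \mathcal{E})$, so by continuity of $N_\Phi$,
\[
\sup_{a \in A} d_{\mathcal{F}}(\Phi(a, x_n), \Phi(a, x_0)) \longrightarrow 0.
\]
In particular $d_{\mathcal{F}}(\Phi(a_n, x_n), \Phi(a_n, x_0)) \to 0$, while $d_{\mathcal{F}}(\Phi(a_n, x_0), \Phi(a_0, x_0)) \to 0$ by separate continuity just established. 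The triangle inequality gives $\Phi(a_n, x_n) \to \Phi(a_0, x_0)$.

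The main subtlety is in the $(i) \Rightarrow (ii)$ direction: one might be tempted to invoke ordinary Heine uniform continuity of $\Phi$ on a compact neighborhood of $K$, but $\mathcal{E}$ need not be locally compact, so no such neighborhood need exist. This is exactly why Lemma \ref{lem23} (Heine--Schwartz) is needed, producing a uniform modulus $\delta$ that covers perturbations away from the compact set $K$ into all of $\mathcal{E}$. The reverse direction is easier, the only point requiring care being the observation that passing to constant functions legitimately converts information about the Nemytskii operator on $C^0(A, \mathcal{E})$ into separate continuity of $\Phi$.
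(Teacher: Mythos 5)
Your proof is correct. Note that the paper does not actually prove Lemma \ref{lem24}; it only cites \cite{BCNP} (Lemma 8.10). Your argument is exactly the one the surrounding text intends: Lemma \ref{lem23} applied to the compact graph $K=\{(a,u(a)) : a\in A\}$ supplies the uniform modulus needed for $(i)\Rightarrow(ii)$ precisely because $\mathcal{E}$ need not be locally compact, and the constant-function trick together with the triangle inequality gives the converse; there is no gap.
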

This result is established in \cite{BCNP} (Lemma 8.10).
\vskip2mm
We need to use the following classical Lemma of Dubois-Reymond.
\begin{lemma}\label{lem25}
Let $\alpha < \beta$ be two real numbers. Let $p \in C^0([ \alpha, \beta ], \R^{n*})$ and \\
 $q \in C^0([ \alpha, \beta ], \R^{n*})$. We assume that, for all $h \in C^1([ \alpha, \beta ], \R^n)$ such that $h(\alpha) = h (\beta) = 0$, we have $\int_{\alpha}^{\beta} (p(t) \cdot h(t) + q(t) \cdot h'(t)) dt = 0$.\\
 Then we have $q \in C^1([ \alpha, \beta ], \R^{n*})$ and $q' = p$.
\end{lemma}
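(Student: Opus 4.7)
My plan is the classical one: I would build an antiderivative of $p$, use integration by parts to push all derivatives onto $h$, and then show the resulting integrand is forced to be constant on $[\alpha,\beta]$.

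First, I would set $P(t) := \int_\alpha^t p(s)\, ds$, so that $P \in C^1([\alpha,\beta], \R^{n*})$ with $P' = p$. Integration by parts, valid because both $P$ and $h$ are $C^1$, together with the boundary conditions $h(\alpha) = h(\beta) = 0$, rewrites the hypothesis as
\begin{equation*}
\int_\alpha^\beta (q(t) - P(t)) \cdot h'(t)\, dt = 0
\end{equation*}
for every admissible $h$. Letting $g := q - P \in C^0([\alpha,\beta], \R^{n*})$ and decomposing $g(t) = \sum_{k=1}^n g_k(t) e_k^*$, I would introduce the constant row vector $c := \sum_{k=1}^n c_k e_k^*$ with $c_k := (\beta - \alpha)^{-1} \int_\alpha^\beta g_k(t)\, dt$. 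Since $\int_\alpha^\beta c \cdot h'(t)\, dt = c \cdot (h(\beta) - h(\alpha)) = 0$ for every admissible $h$, the identity above is equivalent to $\int_\alpha^\beta (g(t) - c) \cdot h'(t)\, dt = 0$.

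The decisive step is to test against the specific function
\begin{equation*}
h(t) := \sum_{k=1}^n \Bigl( \int_\alpha^t (g_k(s) - c_k)\, ds \Bigr) e_k,
\end{equation*}
which lies in $C^1([\alpha,\beta], \R^n)$, vanishes at $\alpha$ by construction, and vanishes at $\beta$ by the choice of $c$. For this $h$, $h'(t) = \sum_k (g_k(t) - c_k) e_k$, so $(g(t) - c) \cdot h'(t) = \sum_k (g_k(t) - c_k)^2$, and the rewritten hypothesis yields $\int_\alpha^\beta \sum_k (g_k(t) - c_k)^2\, dt = 0$. Continuity forces $g \equiv c$, hence $q(t) = c + P(t)$ is $C^1$ with $q'(t) = p(t)$.

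No real obstacle appears; the only subtlety will be the dual pairing between the row-vector values of $p, q$ and the column-vector values of $h$, which I handle by constructing $h$ componentwise via the identification $e_k^* \leftrightarrow e_k$. Alternatively, one could reduce everything to the scalar case by testing with $h = \varphi e_k$ for each $k$ and invoking the one-dimensional Du~Bois-Reymond lemma coordinate by coordinate.
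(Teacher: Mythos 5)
Your proof is correct and complete. Note that the paper does not actually prove this lemma: it simply cites Alex\'eev--Tihomirov--Fomin (p.~60) for the scalar case $n=1$ and asserts that ``working on coordinates, the extension \dots is easy.'' What you have written is the classical Du Bois--Reymond argument (antiderivative $P$ of $p$, integration by parts, subtraction of the mean value $c$, and the self-testing choice $h(t)=\int_\alpha^t(g(s)-c)\,ds$ forcing $\int_\alpha^\beta\sum_k(g_k(t)-c_k)^2\,dt=0$), carried out directly in $\R^{n*}$ rather than coordinate by coordinate; the alternative you mention at the end, testing with $h=\varphi e_k$ and invoking the one-dimensional lemma for each $k$, is precisely the reduction the paper has in mind. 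Either route is fine; your direct version has the small advantage of producing a single explicit test function and making the duality pairing $e_j^*(e_k)=\delta_{jk}$ do the work of the coordinate decomposition, so it can stand as a self-contained proof where the paper only offers a citation.
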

This result is proven in \cite{ATF} (p. 60) when $n = 1$. Working on coordinates, the extension to an arbiratry positive integer number is easy.
\section{The main result}
In this section we state the theorem on the Euler-Lagrange equation as a first-order necessary condition of optimality for problem $(P)$. First we give assumptions which are useful to this theorem.
\begin{itemize}
\item[(A1)] $F \in C^0([0,T] \times C^0([ -r, 0], \R^n) \times \R^n, \R)$.
\item[(A2)] For all $(t, \phi, v) \in [0,T] \times C^0([ -r, 0], \R^n) \times \R^n$, the partial Fr\'echet differential with respect to the second (function) variable, $D_2F(t, \phi,v)$, exists and $D_2F \in C^0([0,T] \times C^0([ -r, 0], \R^n) \times \R^n, C^0([ -r,0], \R^n)^*)$.
\item[(A3)] For all $(t, \phi, v) \in [0,T] \times C^0([ -r, 0], \R^n) \times \R^n$, the partial Fr\'echet differential with respect to the third (vector) variable, $D_3F(t, \phi,v)$, exists and $D_3F \in C^0([0,T] \times C^0([ -r, 0], \R^n) \times \R^n, \R^{n*})$.
\end{itemize}
\vskip3mm
\begin{theorem}\label{th31}
Under (A1, A2, A3) let $x$ be a local solution of the problem (P). 
Then the function $[t \mapsto D_3F(t,x_t,x'(t)) - \int_t^{\min\{t+r, T \}} \mathcal{R}_n(D_2F(s,x_s,x'(s))(t-s) ds ]$ is of class $C^1$ on $[0, T]$ , and we have 
$$
\left\{
\begin{array}{l}
\frac{d}{dt}[D_3F(t,x_t,x'(t))  =\\
 \mathcal{R}_n(D_2F(t,x_t,x'(t)))(0) + \frac{d}{dt}\int_t^{\min\{t+r, T \}} \mathcal{R}_n(D_2F(s,x_s,x'(s))(t-s) ds.
\end{array}
\right.
$$

\end{theorem}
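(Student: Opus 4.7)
The plan is to carry out the classical variational argument adapted to this functional setting. First I would take an arbitrary $h \in C^1([0,T], \R^n)$ with $h(0) = h(T) = 0$, extended by zero to $[-r,0]$; then $x + \lambda h$ is admissible for $(P)$ for every $\lambda \in \R$ (since $(x+\lambda h)_0 = \psi$ and $(x+\lambda h)(T) = \zeta$). The Fr\'echet differentiability of $J$ (to be established in Section~5 from (A1)--(A3)) applied at the local minimum $x$ yields the first-order necessary condition
\begin{equation*}
\int_0^T \bigl( \langle D_2F(t,x_t,x'(t)), h_t \rangle + D_3F(t,x_t,x'(t)) \cdot h'(t) \bigr)\, dt = 0.
\end{equation*}

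Next, write $A(t,\theta) := \mathcal{R}_n(D_2F(t,x_t,x'(t)))(\theta)$. By (\ref{eq25}) we have $\langle D_2F(t,x_t,x'(t)), h_t \rangle = \int_{-r}^0 d_\theta A(t,\theta) \cdot h(t+\theta)$. Since $\theta \mapsto h(t+\theta)$ is $C^1$ on $[-r,0]$, and $A(t,\cdot) \in NBV([-r,0],\R^{n*})$ satisfies $A(t,-r)=0$ together with the convention $A(t,0+)=A(t,0)$, Lebesgue--Stieltjes integration by parts (applied componentwise through (\ref{eq21})) gives
\begin{equation*}
\int_{-r}^0 d_\theta A(t,\theta) \cdot h(t+\theta) = A(t,0) \cdot h(t) - \int_{-r}^0 A(t,\theta) \cdot h'(t+\theta)\, d\theta.
\end{equation*}
Then I would perform the change of variable $s = t+\theta$ in the last integral and swap the order of integration by Fubini. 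Since $h'$ vanishes on $[-r,0]$, the effective region $\{(t,s) : 0 \leq t \leq T,\ \max(0,t-r) \leq s \leq t\}$ becomes, after swapping, $\{(s,t) : 0 \leq s \leq T,\ s \leq t \leq \min(s+r,T)\}$, yielding
\begin{equation*}
\int_0^T \int_{-r}^0 A(t,\theta) \cdot h'(t+\theta)\, d\theta\, dt = \int_0^T h'(s) \cdot Q(s)\, ds,
\end{equation*}
with $Q(s) := \int_s^{\min(s+r,T)} \mathcal{R}_n(D_2F(t,x_t,x'(t)))(s-t)\, dt$, which at $s=t$ is exactly the integral appearing in the statement.

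Substituting back, the variational identity reduces to
\begin{equation*}
\int_0^T \bigl( p(t) \cdot h(t) + (D_3F(t,x_t,x'(t)) - Q(t)) \cdot h'(t) \bigr)\, dt = 0
\end{equation*}
for every admissible $h$, where $p(t) := \mathcal{R}_n(D_2F(t,x_t,x'(t)))(0)$. Setting $q(t) := D_3F(t,x_t,x'(t)) - Q(t)$, the Dubois--Reymond Lemma~\ref{lem25} gives $q \in C^1([0,T],\R^{n*})$ and $q' = p$, which is precisely the announced conclusion.

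The main obstacle is the combined integration-by-parts/Fubini step in the mixed Lebesgue--Stieltjes and Lebesgue setting: one must verify joint measurability of $(t,\theta) \mapsto A(t,\theta)$ (granted by Theorem~\ref{th22}(ii)) together with a domination bound from Theorem~\ref{th22}(iii) to legitimize the swap, and one must carefully track the effect of extending $h$ by zero on the effective region of integration, which is what produces the $\min(t+r,T)$ truncation. One also needs $p \in C^0([0,T],\R^{n*})$ so that Lemma~\ref{lem25} applies; this follows from Theorem~\ref{th22}(i) combined with the continuity of the evaluation map $\gamma \mapsto \gamma(0)$ from $NBV([-r,0],\R^{n*})$ to $\R^{n*}$ (which holds since $\|\gamma(0)\| \leq \|\gamma\|_{BV}$ for $\gamma \in NBV$).
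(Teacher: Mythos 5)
Your proposal is correct and follows essentially the same route as the paper: the first-order condition $DJ(x)\cdot h=0$ on the space of variations vanishing on $[-r,0]$ and at $T$, the Riesz representation (\ref{eq25}) followed by Stieltjes integration by parts using $\mathcal{R}_n(D_2F[t])(-r)=0$, a Fubini swap whose effective region produces the $\min\{t+r,T\}$ truncation, and finally the Dubois--Reymond Lemma \ref{lem25}. Your closing remarks on joint measurability via Theorem \ref{th22} and on the continuity of $t\mapsto \mathcal{R}_n(D_2F[t])(0)$ address exactly the technical points the paper relies on (the latter only implicitly), so nothing essential is missing.
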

\vskip3mm
\noindent
The operator $\mathcal{R}_n$ which is used in this theorem is defined in Section 2 (formulas (\ref{eq24}), (\ref{eq25})). The Euler-Lagrange equation  of this theorem can be written under the integral form as follows
\[
\left\{
\begin{array}{cl}
D_3F(t,x_t,x'(t))  =& \int_0^t \mathcal{R}_n(D_2F(s,x_s,x'(s)))(0) ds\\
\null &  +\int_t^{\min\{t+r, T \}} \mathcal{R}_n(D_2F(s,x_s,x'(s))(t-s) ds + c
\end{array}
\right.
\]
where $c \in \R^{n*}$ is a constant which is independent of $t$.
\vskip1mm
Note the presence of an advance (the contrary of the delay) in this equation. In other settings, \cite{AB} and \cite{Hu}, the Euler-Lagrange also contains a term of advance. 
\section{A function space and operators}
We define the following function space
\begin{equation}\label{eq41}
\mathfrak{X} := \{ x \in C^0([ -r, T], \R^n) : x_{\mid_{[0,T]}} \in C^1([0,T], \R^n) \}.
\end{equation}
On $\mathfrak{X}$ we consider the following norm
\begin{equation}\label{eq42}
\Vert x \Vert_{\mathfrak{X}} := \sup_{-r \leq t \leq T} \Vert x(t) \Vert + \sup_{0 \leq t \leq T} \Vert x'(t) \Vert.
\end{equation}
\begin{lemma}\label{lem41}
$(\mathfrak{X}, \Vert \cdot \Vert_{\mathfrak{X}})$ is a Banach space.
\end{lemma}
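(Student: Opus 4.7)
The plan is to take a Cauchy sequence in $\mathfrak{X}$, extract two uniform limits (one for the functions, one for their derivatives on $[0,T]$), and then use a classical result to identify the derivative of the limit function with the limit of the derivatives.

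More concretely, let $(x_k)_{k \in \N}$ be a Cauchy sequence in $(\mathfrak{X}, \|\cdot\|_{\mathfrak{X}})$. Since the norm dominates both $\sup_{[-r,T]} \|x_k\|$ and $\sup_{[0,T]} \|x_k'\|$, the sequence $(x_k)$ is Cauchy in $C^0([-r,T],\R^n)$ and the sequence of derivatives $(x_k'|_{[0,T]})$ is Cauchy in $C^0([0,T],\R^n)$. Using completeness of these two spaces of continuous functions (with the sup norm), I obtain $x \in C^0([-r,T],\R^n)$ and $y \in C^0([0,T],\R^n)$ such that $x_k \to x$ uniformly on $[-r,T]$ and $x_k'|_{[0,T]} \to y$ uniformly on $[0,T]$.

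The main step is to show that $x|_{[0,T]} \in C^1([0,T],\R^n)$ with $(x|_{[0,T]})' = y$. I will invoke the classical theorem from elementary analysis: if a sequence of $C^1$ functions on a compact interval converges pointwise and their derivatives converge uniformly, then the limit function is $C^1$ and its derivative equals the limit of the derivatives. Applied coordinatewise, this gives $x|_{[0,T]} \in C^1([0,T],\R^n)$ and $(x|_{[0,T]})' = y$; hence $x \in \mathfrak{X}$.

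Finally, for convergence in the $\mathfrak{X}$-norm, I just add the two uniform estimates:
\[
\|x_k - x\|_{\mathfrak{X}} = \sup_{[-r,T]} \|x_k - x\| + \sup_{[0,T]} \|x_k' - y\| \longrightarrow 0.
\]
I expect no serious obstacle: the only nontrivial ingredient is the classical $C^1$ convergence theorem alluded to above, and the rest is a routine decomposition of the norm into two sup-norm pieces that are each handled by completeness of the corresponding space of continuous functions.
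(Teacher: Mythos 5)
Your proof is correct and follows essentially the same route as the paper: decompose the $\mathfrak{X}$-norm into the sup-norm on $[-r,T]$ and the sup-norm of the derivative on $[0,T]$, obtain the two uniform limits by completeness, and identify the derivative of the limit with the limit of the derivatives. The only cosmetic difference is that the paper packages the last step as the completeness of $\bigl(C^1([0,T],\R^n), \Vert \cdot \Vert_{C^1}\bigr)$ and then matches the two limits on $[0,T]$ by uniqueness of uniform limits, whereas you invoke directly the classical theorem on uniform convergence of derivatives; these are the same fact.
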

\begin{proof}
We can also write $\Vert x \Vert_{\mathfrak{X}} = \Vert x \Vert_{\infty, [-r,T]} + \Vert x' \Vert_{\infty, [0,T]}$. Since $\Vert \cdot \Vert_{\infty, [-r,T]}$ and $\Vert \cdot \Vert_{\infty, [0,T]}$ are norms, $\Vert \cdot \Vert_{\mathfrak{X}}$ is a norm. We consider the space $C^1([0,T], \R^n)$ endowed with the norm $\Vert x \Vert_{C^1, [0,T]} := \Vert x \Vert_{\infty, [0,T]} + \Vert x' \Vert_{\infty, [0,T]}$. We know that $(C^1([0,T], \R^n), \Vert \cdot \Vert_{C^1, [0,T]})$ is a Banach space. Let $(x^k)_{k \in \N}$ be a Cauchy sequence in $(\mathfrak{X}, \Vert \cdot \Vert_{\mathfrak{X}})$. Since $(x^k_{\mid_{|0,T]}})_{k \in \N}$ is also a Cauchy sequence in $C^1([0,T], \R^n)$ there exists $u \in C^1([0,T], \R^n)$ such that $\lim_{k \rightarrow + \infty} \Vert x^k_{\mid_{[0,T]}} - u \Vert_{C^1, [0,T]} = 0$. Since $(x^k)_{k \in \N}$ is also a Cauchy sequence in the Banach space $(C^0([-r,T], \R^n), \Vert \cdot \Vert_{\infty, [0,T]})$ there exists $v \in C^0([-r,T], \R^n)$ such that $\lim_{k \rightarrow + \infty} \Vert x^k -v \Vert_{\infty, [-r,T]} = 0$.\\
Since $\Vert \cdot \Vert_{\infty, [0,T]} \leq \Vert \cdot \Vert_{C^1, [0,T]}$ we have $\lim_{k \rightarrow + \infty} \Vert x^k_{\mid_{[0,T]}} - u \Vert_{\infty, [0,T]} = 0$, and since $\Vert \cdot \Vert_{\infty, [0,T]} \leq \Vert \cdot \Vert_{\infty, [-r,T]}$ we have $\lim_{k \rightarrow + \infty} \Vert x^k_{\mid_{[0,T]}} - v_{\mid_{[0,T]}} \Vert_{\infty, [0,T]} = 0$. Using the uniqueness of the limit we obtain $v_{\mid_{[0,T]}} = u$. Therefore we have $v \in \mathfrak{X}$ and from the inequality $\Vert x^k - u \Vert_{\mathfrak{X}} \leq \Vert x^k - v \Vert_{\infty, [-r,T]} + \Vert x^k - v \Vert_{C^1, [0,T]}$ we obtain $\lim_{ k \rightarrow + \infty} \Vert x^k - u \Vert_{\mathfrak{X}}= 0$. 
\end{proof}
We define the set
\begin{equation}\label{eq43}
\mathfrak{A} := \{ x \in \mathfrak{X} : x_0 = \psi, x(T) = \zeta \}.
\end{equation}
\begin{lemma}\label{lem42}
$\mathfrak{A}$ is a non empty closed affine subset of $\mathfrak{X}$ and the unique vector subspace which is  parallel to $\mathfrak{A}$ is $\mathfrak{V} := \{ h \in \mathfrak{X} : h_0 = 0, h(T) = 0 \}$.
\end{lemma}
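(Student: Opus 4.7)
The plan is to prove all four assertions (non-emptiness, closedness, affine structure, and identification of the parallel subspace) in one stroke by realising $\mathfrak{A}$ as a translate of $\mathfrak{V}$ under a continuous linear evaluation map.

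First, I would produce an explicit element $x^{0} \in \mathfrak{A}$ by gluing: set $x^{0}(t) := \psi(t)$ for $t \in [-r,0]$ and $x^{0}(t) := \psi(0) + (t/T)(\zeta - \psi(0))$ for $t \in [0,T]$. The two pieces agree at $t=0$, so $x^{0}$ is continuous on $[-r,T]$; its restriction to $[0,T]$ is affine, hence $C^{1}$; and $x^{0}_{0} = \psi$, $x^{0}(T) = \zeta$. Thus $x^{0} \in \mathfrak{A}$, which also gives non-emptiness.

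Next, I would introduce the linear map $\Phi : \mathfrak{X} \to C^{0}([-r,0], \R^{n}) \times \R^{n}$ defined by $\Phi(x) := (x_{\mid_{[-r,0]}}, x(T))$. The inequality $\Vert x_{\mid_{[-r,0]}} \Vert_{\infty,[-r,0]} + \Vert x(T) \Vert \leq 2\Vert x \Vert_{\infty,[-r,T]} \leq 2\Vert x \Vert_{\mathfrak{X}}$ shows $\Phi \in \mathcal{L}(\mathfrak{X}, C^{0}([-r,0], \R^{n}) \times \R^{n})$. By construction, $\mathfrak{V} = \ker \Phi$ is a closed vector subspace of $\mathfrak{X}$, and $\mathfrak{A} = \Phi^{-1}(\{(\psi,\zeta)\})$ is the preimage of a single point under a continuous map, hence closed. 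Combined with $x^{0} \in \mathfrak{A}$, elementary affine-subspace reasoning yields $\mathfrak{A} = x^{0} + \mathfrak{V}$, so $\mathfrak{A}$ is a closed affine subset with parallel subspace $\mathfrak{V}$. The uniqueness of the parallel subspace is automatic: for any non-empty affine subset $A$ of a vector space, the parallel subspace coincides with the set of differences $A - A$, which is therefore uniquely determined.

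No step poses a genuine difficulty; the only point that requires any attention is the construction of $x^{0}$, specifically ensuring continuity at the junction $t=0$. This matters because the space $\mathfrak{X}$ prescribes $C^{0}$-regularity across that junction while asking $C^{1}$-regularity only separately on $[0,T]$, so matching the values $x^{0}(0^{-}) = \psi(0) = x^{0}(0^{+})$ suffices, with no derivative-matching condition to worry about; the fact that $\psi$ has only $C^{0}$ regularity at $0$ is therefore harmless.
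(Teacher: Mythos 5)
Your proof is correct and follows essentially the same route as the paper: the same explicit element obtained by gluing $\psi$ to the affine interpolant on $[0,T]$, and the same continuity estimates $\Vert x_{\mid_{[-r,0]}}\Vert_{\infty,[-r,0]} \leq \Vert x\Vert_{\mathfrak{X}}$, $\Vert x(T)\Vert \leq \Vert x\Vert_{\mathfrak{X}}$ for closedness. Your packaging of these estimates into the continuous linear map $\Phi$ with $\mathfrak{A}=\Phi^{-1}(\{(\psi,\zeta)\})$ and $\mathfrak{V}=\ker\Phi$ is a tidier way of organizing the verifications the paper leaves to the reader, but it is not a different argument.
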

\begin{proof}
Setting $y(t) := \psi(t)$ when $t \in [-r, 0]$ and $y(t) := \frac{t}{T}(\zeta - \psi(0)) + \psi(0)$, we see that $y \in \mathfrak{A}$ which proves that $\mathfrak{A}$ is nonempty. From the inequalities $\Vert x(T) \Vert \leq \Vert x \Vert_{\mathfrak{X}}$ and $\Vert x_{\mid_{[-r,0]}} \Vert_{\infty, [-r,0]}  \leq  \Vert x \Vert_{\mathfrak{X}}$, we obtain that $\mathfrak{A}$ is closed in $\mathfrak{X}$. It is easy to verify that $\mathfrak{A}$ is affine. The unique vector subspace of $\mathfrak{X}$ is $\mathfrak{V} = \mathfrak{A} - u$ where $u \in \mathfrak{A}$. and we can easily verify the announced formula for $\mathfrak{V}$.
\end{proof}
When $x \in C^0([ -r,T], \R^n)$ we define
\begin{equation}\label{eq44}
\underline{x} : [0,T] \rightarrow C^0([ -r,0], \R^n), \hskip3mm \underline{x}(t) := x_t.
\end{equation}
\begin{lemma}\label{lem43}
When $x \in C^0([ -r, T], \R^n)$ we have $\underline{x} \in C^0([0,T], C^0([ -r,0], \R^n))$.
\end{lemma}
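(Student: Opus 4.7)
The plan is to reduce continuity of $\underline{x}$ to the uniform continuity of $x$ itself. Since $[-r,T]$ is a compact metric space and $x \in C^0([-r,T],\R^n)$, Heine's theorem gives that $x$ is uniformly continuous on $[-r,T]$.

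Fix $t \in [0,T]$ and $\epsilon > 0$. From uniform continuity, there exists $\delta > 0$ such that for all $u,v \in [-r,T]$ with $|u - v| \leq \delta$, one has $\|x(u) - x(v)\| \leq \epsilon$. Now for any $s \in [0,T]$ with $|s - t| \leq \delta$ and any $\theta \in [-r,0]$, both $t+\theta$ and $s+\theta$ lie in $[-r,T]$, and $|(t+\theta) - (s+\theta)| = |t - s| \leq \delta$. Hence $\|x(t+\theta) - x(s+\theta)\| \leq \epsilon$. Taking the supremum over $\theta \in [-r,0]$ yields
$$
\|\underline{x}(t) - \underline{x}(s)\|_{\infty,[-r,0]} = \sup_{\theta \in [-r,0]} \|x(t+\theta) - x(s+\theta)\| \leq \epsilon,
$$
which is the required continuity of $\underline{x}$ at $t$.

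There is no real obstacle here; the only point to be careful about is noting that the translation $\theta \mapsto t+\theta$ preserves distances, so a single modulus of continuity for $x$ serves uniformly for all $\theta$. One could alternatively invoke Lemma~\ref{lem24} applied to the evaluation map $\Phi : [-r,0] \times C^0([-r,T],\R^n) \to \R^n$, $(\theta,x) \mapsto x(\theta)$, after suitably reformulating, but the direct argument above is shorter and self-contained.
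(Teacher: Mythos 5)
Your proof is correct and follows essentially the same route as the paper: invoke Heine's theorem to get uniform continuity of $x$ on the compact interval $[-r,T]$, then observe that the shift $\theta \mapsto t+\theta$ preserves distances so one modulus of continuity works uniformly in $\theta$. No issues.
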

\begin{proof}
Using a Heine's theorem, since $[-r,T]$ is compact and $x$ is continuous, $x$ is uniformly continuous on $[-r,T]$, i.e.\\
$\forall \epsilon > 0, \exists \delta_{\epsilon} > 0, \forall t,s \in [-r,T], \vert t-s \vert \leq \delta_{\epsilon} \Longrightarrow \Vert x(t) - x(s) \Vert \leq \epsilon$.\\
Let $\epsilon > 0$; if $t, s \in [0,T]$ are such that $\vert t-s \vert \leq \delta_{\epsilon} $ then, for all $\theta \in [-r,0]$ we have $\vert (t + \theta) - (s + \theta) \vert \leq \delta_{\epsilon}$ which implies $\Vert x(t + \theta) - x(s + \theta) \Vert \leq \epsilon$, therefore $\Vert \underline{x} (t) - \underline{x}(s) \Vert_{\infty, [0,T]} \leq \epsilon$.
\end{proof}
After Lemma \ref{lem43} we can define the operator
\begin{equation}\label{eq45}
\mathcal{S} : C^0([ -r, T], \R^n) \rightarrow C^0([0,T], C^0([ -r,0], \R^n)), \hskip3mm \mathcal{S}(x) := \underline{x}.
\end{equation}
\begin{lemma}\label{lem44}
$\mathcal{S}$ is a linear continuous operator from $ (C^0([ -r, T], \R^n), \Vert \cdot \Vert_{\infty})$ into $( C^0([0,T], C^0([ -r,0], \R^n)), \Vert \cdot \Vert_{\infty})$. Setting $\mathcal{S}^1 := \mathcal{S}_{\mid_{\mathfrak{X}}}$, $\mathcal{S}^1$ is a linear continuous operator from $(\mathfrak{X}, \Vert \cdot \Vert_{\mathfrak{X}})$ into $( C^0([0,T], C^0([ -r,0], \R^n)), \Vert \cdot \Vert_{\infty})$.
\end{lemma}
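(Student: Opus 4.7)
The plan is to verify linearity directly from the definition and then establish continuity via a single straightforward norm estimate, after which the restriction claim follows by comparing the two norms on $\mathfrak{X}$.

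First I would check linearity: for $x, y \in C^0([-r,T], \R^n)$ and $\lambda \in \R$, the definition $\underline{x}(t)(\theta) = x(t+\theta)$ gives pointwise (in both $t$ and $\theta$) that $\mathcal{S}(\lambda x + y)(t)(\theta) = (\lambda x + y)(t+\theta) = \lambda x(t+\theta) + y(t+\theta) = (\lambda \mathcal{S}(x) + \mathcal{S}(y))(t)(\theta)$. This is purely formal.

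The main estimate is the key step. Unwinding the target norm,
\[
\Vert \mathcal{S}(x) \Vert_{\infty} = \sup_{t \in [0,T]} \Vert \underline{x}(t) \Vert_{\infty, [-r,0]} = \sup_{t \in [0,T]} \sup_{\theta \in [-r,0]} \Vert x(t+\theta) \Vert.
\]
As $(t,\theta)$ ranges over $[0,T] \times [-r,0]$, the sum $t + \theta$ ranges within $[-r, T]$, so the double supremum is bounded by $\sup_{s \in [-r,T]} \Vert x(s) \Vert = \Vert x \Vert_{\infty, [-r,T]}$. Hence $\Vert \mathcal{S}(x) \Vert_{\infty} \leq \Vert x \Vert_{\infty, [-r,T]}$, which combined with linearity yields continuity of $\mathcal{S}$ with operator norm at most $1$.

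For the restriction $\mathcal{S}^1 = \mathcal{S}_{\mid_{\mathfrak{X}}}$, linearity is inherited, and from the definition (\ref{eq42}) of $\Vert \cdot \Vert_{\mathfrak{X}}$ we immediately have $\Vert x \Vert_{\infty, [-r,T]} \leq \Vert x \Vert_{\mathfrak{X}}$ for all $x \in \mathfrak{X}$, so the previous estimate upgrades to $\Vert \mathcal{S}^1(x) \Vert_{\infty} \leq \Vert x \Vert_{\mathfrak{X}}$, giving continuity of $\mathcal{S}^1$. There is no real obstacle here; the only thing to be slightly careful about is correctly interpreting the sup-of-sup in the target norm and noticing that the range of $t + \theta$ is exactly $[-r, T]$, which is why the inclusion $\mathcal{S}(x) \in C^0([0,T], C^0([-r,0], \R^n))$ (already granted by Lemma \ref{lem43}) together with the above inequality suffices.
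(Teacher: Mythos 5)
Your proposal is correct and follows essentially the same route as the paper: linearity is checked pointwise, continuity of $\mathcal{S}$ comes from the sup-of-sup computation showing $\Vert \mathcal{S}(x) \Vert_{\infty} \leq \Vert x \Vert_{\infty,[-r,T]}$ (the paper in fact notes this is an equality, since $t+\theta$ covers all of $[-r,T]$), and continuity of $\mathcal{S}^1$ follows from $\Vert \cdot \Vert_{\infty,[-r,T]} \leq \Vert \cdot \Vert_{\mathfrak{X}}$.
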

\begin{proof}
The linearity of $\mathcal{S}$ is clear. When $x \in C^0([-r,T], \R^n)$ we have $\Vert \mathcal{S}(x) \Vert_{\infty} = \sup_{0 \leq t \leq T}(\sup_{-r \leq \theta \leq 0} \Vert x(t + \theta) \Vert) = \sup_{-r \leq s \leq T} \Vert x(s) \Vert = \Vert x \Vert_{\infty, [-r,T]}$ which implies the continuity of $\mathcal{S}$. \\
The continuity of $\mathcal{S}^1$ results from the inequality  $\Vert \cdot \Vert_{\infty, [-r,T]} \leq \Vert \cdot \Vert_{\mathfrak{X}}$.
\end{proof}
Now we consider the following operator 
\begin{equation}\label{eq46}
\mathcal{D} : \mathfrak{X} \rightarrow C^0([0,T], \R^n), \hskip3mm \mathcal{D}(x) := x'.
\end{equation}
\begin{lemma}\label{lem45}
The operator $\mathcal{D}$ is linear continuous from $(\mathfrak{X}, \Vert \cdot \Vert_{\mathfrak{X}})$ into \\
$(C^0([0,T], \R^n), \Vert \cdot \Vert_{\infty})$.
\end{lemma}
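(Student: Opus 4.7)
The plan is to verify the two claims (linearity and continuity) directly from the definitions, since both are essentially built into the setup.

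For linearity, I would simply appeal to the linearity of the differentiation operator: if $x, y \in \mathfrak{X}$ and $\lambda, \mu \in \R$, then the restriction $(\lambda x + \mu y)_{\mid [0,T]}$ is in $C^1([0,T], \R^n)$ with derivative $\lambda x' + \mu y'$, so $\mathcal{D}(\lambda x + \mu y) = \lambda \mathcal{D}(x) + \mu \mathcal{D}(y)$. This step is immediate and requires no estimate.

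For continuity, the key observation is that the norm $\Vert \cdot \Vert_{\mathfrak{X}}$ was defined in (\ref{eq42}) precisely as $\Vert x \Vert_{\infty, [-r,T]} + \Vert x' \Vert_{\infty, [0,T]}$ (the alternative form used in the proof of Lemma~\ref{lem41}). Therefore
\[
\Vert \mathcal{D}(x) \Vert_{\infty, [0,T]} = \Vert x' \Vert_{\infty, [0,T]} \leq \Vert x \Vert_{\infty, [-r,T]} + \Vert x' \Vert_{\infty, [0,T]} = \Vert x \Vert_{\mathfrak{X}},
\]
which yields $\Vert \mathcal{D} \Vert_{\mathcal{L}} \leq 1$ and hence the continuity of $\mathcal{D}$.

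Honestly, there is no genuine obstacle here: the statement is tautological in the sense that the graph norm $\Vert \cdot \Vert_{\mathfrak{X}}$ was designed to make $\mathcal{D}$ continuous. The only thing that needs to be checked (and was already checked in Lemma~\ref{lem41}) is that $\mathfrak{X}$ is indeed a Banach space with this norm, which ensures that $\mathcal{D}$ is a well-defined bounded operator between Banach spaces rather than merely between normed spaces.
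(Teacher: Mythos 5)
Your proof is correct and takes essentially the same route as the paper: linearity is immediate from linearity of differentiation, and continuity follows from the inequality $\Vert \mathcal{D}(x) \Vert_{\infty, [0,T]} = \Vert x' \Vert_{\infty, [0,T]} \leq \Vert x \Vert_{\mathfrak{X}}$, which is exactly the paper's argument. (Your closing remark is a slight overstatement --- completeness of $\mathfrak{X}$ is not needed for $\mathcal{D}$ to be a well-defined bounded operator, only the norm inequality is --- but this does not affect the validity of the proof.)
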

\begin{proof}
The linearity of $\mathcal{D}$ is clear. When $x \in \mathfrak{X}$, we have
$$\Vert \mathcal{D}(x) \Vert_{\infty, [0,T]} = \Vert x' \Vert_{\infty, [0,T]} \leq \Vert x \Vert_{\mathfrak{X}}$$
which implies the continuity of $\mathcal{D}$.
\end{proof}
When $V$ and $W$ are normed vector spaces we consider the operator 
$$B : \mathcal{L}(V,W) \times E \rightarrow W, B(L,y) := L \cdot y.$$
$B$ is bilinear continuous, and when $I$ is a compact interval of $\R$, we consider the Nemytskii operator defined on $B$
\begin{equation}\label{eq47}
\left.
\begin{array}{r}
N_B : C^0(I, \mathcal{L}(V,W)) \times C^0(I,V) \rightarrow C^0(I,W)\\
N_B(L, h) := [t \mapsto B(L(t), h(t)) = L(t) \cdot h(t) ]
\end{array}
\right\}
\end{equation}
where we have assimilated $ C^0(I, \mathcal{L}(V,W)) \times C^0(I,V)$ and $ C^0(I, \mathcal{L}(V,W)) \times V)$. $N_B$ is bilinear and the following inequality holds
\begin{equation}\label{eq48}
\forall L \in C^0(I, \mathcal{L}(V,W)), \forall h \in C^0(I,V), \Vert N_B(L,h) \Vert_{\infty, I} \leq \Vert L \Vert_{\infty, I} \cdot \Vert h \Vert_{\infty, I}.
\end{equation}
This inequality shows that $N_B$ is continuous and consequently it is of class $C^1$.
\section{The differentiability of the criterion}
First we establish a general result on the differentiability of the Nemytskii operators.
\begin{lemma}\label{lem51}
Let $I$ be a compact interval of $\R$, $V$, $W$ be two normed vector spaces, and $\Phi : I \times V \rightarrow  W$ be a mapping. We assume that the following conditions are fulfilled.
\begin{itemize}
\item[(a)] $\Phi \in C^0(I \times V, W)$.
\item[(b)] For all $t \in I$, the partial Fr\'echet differential of $\Phi$ with respect to the second variable, $D_2 \Phi(t,x)$, exists for all $x \in V$, and $D_2 \Phi \in C^0(I \times V, \mathfrak{L}(V,W))$.
\end{itemize}
Then the operator $N_{\Phi}$ defined by $N_{\Phi}(v) := [ t \mapsto \Phi(t, v(t))]$ is of class $C^1$  from $C^0(I,V)$ into $C^0(I,W)$, and we have $D N_{\Phi}(v) \cdot \delta v = [t \mapsto D_2 \Phi(t, v(t)) \cdot \delta v(t)]$.
\end{lemma}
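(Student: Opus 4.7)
The plan is to identify the candidate derivative, verify it is a well-defined continuous linear operator, establish the first-order remainder estimate via the mean value inequality combined with the Heine--Schwartz lemma, and finally obtain $C^1$-regularity by continuity of the candidate derivative.

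First, Lemma~\ref{lem24} applied to $\Phi$ shows $N_{\Phi} \in C^0(C^0(I,V), C^0(I,W))$, and applied to $D_2\Phi$ shows that the map $\Psi : v \mapsto [t \mapsto D_2\Phi(t,v(t))]$ belongs to $C^0(C^0(I,V), C^0(I,\mathcal{L}(V,W)))$. Define the candidate derivative at $v \in C^0(I,V)$ by
\[
\Lambda(v) \cdot \delta v := N_B(\Psi(v), \delta v) = [t \mapsto D_2\Phi(t,v(t)) \cdot \delta v(t)],
\]
where $N_B$ is the bilinear Nemytskii operator of (\ref{eq47}). The inequality (\ref{eq48}) gives $\|\Lambda(v) \cdot \delta v\|_{\infty,I} \leq \|\Psi(v)\|_{\infty,I} \, \|\delta v\|_{\infty,I}$, so $\Lambda(v) \in \mathcal{L}(C^0(I,V), C^0(I,W))$ with $\|\Lambda(v)\|_{\mathcal{L}} \leq \|\Psi(v)\|_{\infty,I}$.

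Next I would establish Fr\'echet differentiability at an arbitrary $v \in C^0(I,V)$. For $\delta v \in C^0(I,V)$ and $t \in I$, the classical mean value inequality for Fr\'echet derivatives gives
\[
\bigl\| \Phi(t,v(t)+\delta v(t)) - \Phi(t,v(t)) - D_2\Phi(t,v(t)) \cdot \delta v(t) \bigr\|_W
\leq \|\delta v(t)\|_V \cdot \omega(t,\delta v),
\]
where $\omega(t,\delta v) := \sup_{s \in [0,1]} \|D_2\Phi(t,v(t)+s\delta v(t)) - D_2\Phi(t,v(t))\|_{\mathcal{L}}$. The main obstacle is to show that $\sup_{t \in I} \omega(t,\delta v) \to 0$ as $\|\delta v\|_{\infty,I} \to 0$, since $V$ is not locally compact in general. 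This is precisely where Lemma~\ref{lem23} (Heine--Schwartz) is decisive: the set $K := \{(t,v(t)) : t \in I\}$ is compact in $I \times V$ as the continuous image of the compact $I$, so applying Lemma~\ref{lem23} to $D_2\Phi \in C^0(I \times V, \mathcal{L}(V,W))$ and $K$, for every $\epsilon > 0$ there exists $\delta^{\epsilon} > 0$ such that $\|D_2\Phi(t,v(t)) - D_2\Phi(t',y)\|_{\mathcal{L}} \leq \epsilon$ whenever $|t-t'| + \|v(t)-y\|_V \leq \delta^{\epsilon}$. Taking $t'=t$ and $y=v(t)+s\delta v(t)$ with $s \in [0,1]$, as soon as $\|\delta v\|_{\infty,I} \leq \delta^{\epsilon}$ we obtain $\sup_{t \in I} \omega(t,\delta v) \leq \epsilon$, hence
\[
\bigl\| N_{\Phi}(v+\delta v) - N_{\Phi}(v) - \Lambda(v) \cdot \delta v \bigr\|_{\infty,I} \leq \epsilon \, \|\delta v\|_{\infty,I}.
\]
This proves that $N_{\Phi}$ is Fr\'echet differentiable at $v$ with $DN_{\Phi}(v) = \Lambda(v)$.

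Finally, I would derive the $C^1$-regularity from the continuity of $v \mapsto \Lambda(v)$. For $v, w \in C^0(I,V)$, using (\ref{eq48}),
\[
\|\Lambda(v) - \Lambda(w)\|_{\mathcal{L}} = \sup_{\|\delta v\|_{\infty,I} \leq 1} \|N_B(\Psi(v) - \Psi(w), \delta v)\|_{\infty,I} \leq \|\Psi(v) - \Psi(w)\|_{\infty,I},
\]
which tends to $0$ as $w \to v$ by continuity of $\Psi$. Hence $DN_{\Phi} \in C^0(C^0(I,V), \mathcal{L}(C^0(I,V), C^0(I,W)))$, and $N_{\Phi}$ is of class $C^1$ with the announced formula.
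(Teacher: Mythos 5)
Your proposal is correct and follows essentially the same route as the paper's proof: Lemma \ref{lem24} for continuity of the two Nemytskii operators, the Heine--Schwartz Lemma \ref{lem23} applied to the compact $K=\{(t,v(t)):t\in I\}$ to get the uniform smallness of the oscillation of $D_2\Phi$, the mean value inequality for the remainder, and the estimate (\ref{eq48}) for the continuity of the differential. (Incidentally, you cite Lemma \ref{lem23} where the paper's text mistakenly refers to Lemma \ref{lem24} for that step, so your attribution is the correct one.)
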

\begin{proof}
Under our assumptions, from Lemma \ref{lem24} the following assertions hold.
\begin{equation}\label{eq51}
N_{\Phi} \in C^0(C^0(I,V), C^0(I,W))
\end{equation}
\begin{equation}\label{eq52}
N_{D_2 \Phi} \in C^0(C^0(I,V), C^0(I,\mathcal{L}(V,W)).
\end{equation}
We arbitrarily fix $v \in C^0(I,V)$. The set $K := \{ (t,v(t)) : t \in I \}$ is compact as the image of a compact by a continuous mapping. 
Let $\epsilon > 0$; using Lemma \ref{lem24} we have 
\[
\left\{
\begin{array}{l}
\exists \beta^{\epsilon} > 0, \forall t \in I, \forall s \in I, \forall y \in V, \\
\vert t - s \vert + \Vert v(t) - y \Vert \leq \beta^{\epsilon} \Longrightarrow \Vert D_2 \Phi(t,u(t)) - D_2 \Phi(s, y) \Vert \leq \epsilon,
\end{array}
\right.
\]
which implies
$$\exists \beta^{\epsilon} > 0, \forall t \in I,\forall y \in V,\Vert v(t) - y \Vert \leq \beta^{\epsilon} \Longrightarrow \Vert D_2 \Phi(t,u(t)) - D_2 \Phi(t, y) \Vert \leq \epsilon,$$
Let $\delta v \in C^0(I,V)$ such that $\Vert \delta v \Vert_{\infty} \leq \beta^{\epsilon}$. For all $y \in \; ] v(t), v(t) + \delta v(t) [\;  = \{ (1- \lambda) v(t) + \lambda (v(t) + \delta v(t)) \}$, we have $\Vert y \Vert \leq \Vert \delta v(t) \Vert \leq \beta^{\epsilon}$, and consequently $ \Vert D_2 \Phi(t,u(t)) - D_2 \Phi(t, y) \Vert \leq \epsilon$. Using the mean value theorem (\cite{ATF}, Corollaire 1, p. 141 ), we have 
\[
\left\{
\begin{array}{l}
\Vert \Phi(t, v(t) + \delta v(t)) - \Phi(t, v(t)) - D_2 \Phi(t, v(t)) \cdot \delta v(t) \Vert 
\leq \\
 \sup_{y \in ] v(t), v(t)+ \delta v(t) [} \Vert D_2 \phi(t,v(t)) - D_2 \Phi(t,y) \Vert \cdot \Vert \delta v(t) \Vert 
\leq \epsilon \Vert \delta v(t) \Vert
\end{array}
\right.
\]
which implies, taking the supremum on the $t \in I$, 
$$\Vert N_{\Phi}(v + \delta v) - N_{\Phi}(v) - N_B(N_{D_2 \Phi}(v), \delta v ) \Vert_{\infty,I} \leq \epsilon \Vert \delta v \Vert_{\infty,I}.$$
And so we have proven that $N_{\Phi}$ is Fr\'echet differentiable at $v$ and
$$DN_{\Phi}(v) \cdot \delta v = N_B( N_{D_2 \Phi} , \delta v).$$
When $v, v^1, \delta v \in C^0(I, V)$, using (\ref{eq48}) we have 
\[
\begin{array}{l}
\Vert (D N_{\Phi}(v) - D N_{\Phi}(v^1)) \cdot \delta v \Vert_{\infty,I} = \Vert N_B( N_{D_2 \Phi}(v), \delta v) - N_B( N_{D_2 \Phi}(v^1), \delta v)\Vert_{\infty,I} = \\
\Vert N_B(N_{D_2 \Phi}(v) -N_{D_2 \Phi}(v^1), \delta v) \Vert_{\infty,I} \leq \Vert N_{D_2 \Phi}(v) -N_{D_2 \Phi}(v^1) \Vert \cdot \Vert \delta v \Vert_{\infty,I},
\end{array}
\]
and taking the supremum on the $\delta v \in C^0(I,V)$ such that $\Vert \delta v \Vert_{\infty,I} \leq 1$ we obtain 
$$\Vert D N_{\Phi}(v) - D N_{\Phi}(v^1) \Vert_{\infty,I} \leq \Vert N_{D_2 \Phi}(v) -N_{D_2 \Phi}(v^1) \Vert_{\infty,I}$$
and (\ref{eq52}) implies the continuity of $D N_{\Phi}$.
\end{proof}
In different frameworks, similar results of differentiability of Nemytskii operators were proven in \cite{BCNP} (for almost periodic functions) , in \cite{BCr} (for bounded sequences), in \cite{BBC}(for continuous functions which converge to zero at infinite).
\vskip2mm
From $F : [0,T] \times C^0([ -r,0], \R^n) \times \R^n \rightarrow \R$ we define the following Nemytskii operator
\begin{equation}\label{eq53}
\left.
\begin{array}{l}
N_F : C^0([0,T], C^0([ -r, 0], \R^n)) \times C^0([0,T], \R^n) \rightarrow C^0([0,T], \R^n)\\
N_F(U,v) := [ t \mapsto F(t, U(t), v(t))].
\end{array}
\right\}
\end{equation}
\begin{lemma}\label{lem52}
Under (A1, A2, A3), $N_F$ is of class $C^1$ and for all $U$, $ \delta U \in  C^0([0,T], C^0([ -r, 0], \R^n))$, for all $v 
$, $\delta v \in C^0([0,T], \R^n)$ we have \\
$D N_F(U,v) \cdot(\delta U, \delta v) = [t \mapsto D_2F(t,U(t),v(t)) \cdot \delta U(t) + D_3 F(t, U(t), v(t)) \cdot \delta v(t)]$.
\end{lemma}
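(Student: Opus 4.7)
The plan is to reduce Lemma \ref{lem52} to Lemma \ref{lem51} by merging the second and third arguments of $F$ into a single variable. Concretely, I would introduce the product Banach space $V := C^0([-r, 0], \R^n) \times \R^n$, equipped with the norm $\Vert (\phi, v) \Vert_V := \Vert \phi \Vert_{\infty, [-r,0]} + \Vert v \Vert$, and define $G : [0,T] \times V \rightarrow \R$ by $G(t, (\phi, v)) := F(t, \phi, v)$. The canonical linear homeomorphism $C^0([0,T], V) \simeq C^0([0,T], C^0([-r,0], \R^n)) \times C^0([0,T], \R^n)$ makes $N_G$ correspond exactly to $N_F$, so it will suffice to prove the conclusion for $N_G$.

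I would then check the hypotheses of Lemma \ref{lem51} for $G$. Condition (a), continuity of $G$ on $[0,T] \times V$, is immediate from (A1). For condition (b), I would appeal to the classical criterion in Banach spaces: if a function on a product of two normed vector spaces admits partial Fr\'echet differentials with respect to each factor and both partials are jointly continuous on the whole domain, then the total differential along the product variable exists and equals the direct sum of the two partials. Applied with (A2) and (A3), this gives
\[
D_2 G(t, (\phi, v)) \cdot (\delta \phi, \delta v) = D_2 F(t, \phi, v) \cdot \delta \phi + D_3 F(t, \phi, v) \cdot \delta v,
\]
and the required continuity of $D_2 G : [0,T] \times V \rightarrow \mathcal{L}(V, \R)$ follows from (A2), (A3), and the fact that the map $(L_2, L_3) \mapsto [(\delta \phi, \delta v) \mapsto L_2 \cdot \delta \phi + L_3 \cdot \delta v]$ from $C^0([-r,0], \R^n)^* \times \R^{n*}$ into $\mathcal{L}(V, \R)$ is linear and continuous (with norm bounded by $1$ for the chosen product norm).

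Invoking Lemma \ref{lem51} then yields that $N_G$ is of class $C^1$ on $C^0([0,T], V)$ with $D N_G(w) \cdot \delta w = [t \mapsto D_2 G(t, w(t)) \cdot \delta w(t)]$. Unpacking this formula through the identification $w = (U, v)$, $\delta w = (\delta U, \delta v)$ recovers the announced formula for $D N_F(U, v) \cdot (\delta U, \delta v)$. I do not anticipate a genuine obstacle in this argument beyond the bookkeeping of the product structure; the only non-trivial ingredient is the passage from existence and continuity of two partial Fr\'echet differentials to joint differentiability in the product variable, which is a standard Banach-space fact that I would cite rather than re-derive here.
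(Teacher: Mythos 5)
Your proposal is correct and follows essentially the same route as the paper: apply Lemma \ref{lem51} with $V = C^0([-r,0],\R^n)\times\R^n$, $W=\R$, $\Phi = F$, using that the Fr\'echet differential of $F(t,\cdot,\cdot)$ at $(U(t),v(t))$ is $(\delta\phi,\delta v)\mapsto D_2F\cdot\delta\phi + D_3F\cdot\delta v$. The only difference is that you spell out the standard fact that continuous partial differentials yield joint differentiability, which the paper asserts without comment.
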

\begin{proof}
It is a straightforward consequence of Lemma \ref{lem51} with $V = C^0([ -r, 0], \R^n) \times \R^n$, $W = \R$, $\Phi = F$, and by using that the differential of $F(t,\cdot , \cdot)$ at $(U(t), v(t))$ applied to $(\delta U(t), \delta v(t))$ is equal to $D_2F(t,U(t),v(t)) \cdot \delta U(t) + D_3 F(t, U(t), v(t)) \cdot \delta v(t)$.
\end{proof}
\begin{lemma}\label{lem53}
Under (A1, A2, A3), $J \in C^1(\mathfrak{X}, \R)$ and for all $x \in \mathfrak{A}$ and for all $h \in \mathfrak{V}$, we have\\
$DJ(x) \cdot h = \int_0^T (D_2 F(t, x_t, x'(t)) \cdot h_t + D_3F(t, x_t, x'(t)) \cdot h'(t)) dt$.
\end{lemma}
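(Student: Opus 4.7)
The plan is to realize the functional $J$ as a composition of four maps whose differentiability properties have already been catalogued, and then apply the chain rule.

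First, I would write $J = \mathcal{I} \circ N_F \circ (\mathcal{S}^1, \mathcal{D})$, where $\mathcal{S}^1 : \mathfrak{X} \to C^0([0,T], C^0([-r,0], \R^n))$ and $\mathcal{D} : \mathfrak{X} \to C^0([0,T], \R^n)$ are the operators of Lemma \ref{lem44} and Lemma \ref{lem45}, $N_F$ is the Nemytskii operator of (\ref{eq53}), and $\mathcal{I} : C^0([0,T], \R) \to \R$ is the Lebesgue integration functional $f \mapsto \int_0^T f(t) dt$. That this composition recovers $J(x) = \int_0^T F(t, x_t, x'(t)) dt$ is a direct unfolding of the definitions: $(\mathcal{S}^1, \mathcal{D})(x) = (\underline{x}, x')$, so $N_F(\underline{x}, x')(t) = F(t, x_t, x'(t))$, and applying $\mathcal{I}$ gives $J(x)$.

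Next, I would justify the $C^1$-regularity of each factor. The map $(\mathcal{S}^1, \mathcal{D}) : \mathfrak{X} \to C^0([0,T], C^0([-r,0], \R^n)) \times C^0([0,T], \R^n)$ is continuous linear (Lemmas \ref{lem44} and \ref{lem45}), hence of class $C^1$ with $D(\mathcal{S}^1, \mathcal{D})(x) \cdot h = (\underline{h}, h')$. The Nemytskii operator $N_F$ is of class $C^1$ by Lemma \ref{lem52}, with the differential provided there. Finally, the functional $\mathcal{I}$ is continuous linear (by $|\mathcal{I}(f)| \leq T \Vert f \Vert_{\infty,[0,T]}$), hence $C^1$ with $D\mathcal{I}(f) \cdot g = \int_0^T g(t) dt$. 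The composition of $C^1$ maps being $C^1$, this gives $J \in C^1(\mathfrak{X}, \R)$.

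It then remains to apply the chain rule. For $x \in \mathfrak{A}$ and $h \in \mathfrak{V}$ (or indeed any $h \in \mathfrak{X}$),
\[
DJ(x) \cdot h = D\mathcal{I}\bigl(N_F(\underline{x}, x')\bigr) \cdot \Bigl( DN_F(\underline{x}, x') \cdot (\underline{h}, h') \Bigr),
\]
which, by Lemma \ref{lem52} and linearity of $\mathcal{I}$, equals
\[
\int_0^T \bigl( D_2F(t, x_t, x'(t)) \cdot h_t + D_3F(t, x_t, x'(t)) \cdot h'(t) \bigr) dt.
\]
This is exactly the announced formula. No step is really the main obstacle here, since the heavy lifting has already been done in Lemmas \ref{lem44}, \ref{lem45}, and \ref{lem52}; the only subtlety worth flagging is the identification of the pair map $(\mathcal{S}^1, \mathcal{D})$ with a linear map valued in the product space so that Lemma \ref{lem52} applies directly to its image.
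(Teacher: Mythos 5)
Your proof is correct and follows essentially the same route as the paper: both write $J$ as the composition $\mathcal{I}\circ N_F\circ(\mathcal{S}^1,\mathcal{D})$ (the paper phrases $\mathcal{S}^1$ as $\mathcal{S}\circ in$ with an explicit inclusion operator $in$), invoke the linearity and continuity of the outer and inner factors, Lemma \ref{lem52} for $N_F$, and conclude by the chain rule. No substantive difference.
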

\begin{proof}
We introduce the operator $in : \mathfrak{X} \rightarrow C^0([-r,T], \R^n)$ by setting $in (x) := x$, and the functional $I : C^0([0,T], \R) \rightarrow \R$ by setting $I(f) := \int_0^T f(t)dt$ the Riemann integral of $f$ on $[0,T]$. The operator $in$ is clearly linear and from the inequality $\Vert \cdot \Vert_{\infty, [-r,T]} \leq \Vert \cdot \Vert_{\mathfrak{X}}$, it is continuous. $I$ is linear and by using the mean value theorem, it is continuous.\\
Note that $J = I \circ N_F \circ (\mathcal{S} \circ in, \mathcal{D})$. Since $in$, $\mathcal{S}$, $\mathcal{D}$ and $I$ are linear continuous, they are of class $C^1$, and so $(\mathcal{S} \circ in, \mathcal{D})$ is of class $C^1$. Using Lemma \ref{lem52}, $N_F$ is of class $C^1$, and so $J$ is of class $C^1$ as a composition of $C^1$ mappings. The calculation of $DJ$ is a simple application of the Chain Rule :
\[
\begin{array}{ccl}
DJ(x) \cdot h &=& DI(N_F(\mathcal{S}(in(x), \mathcal{D}(x)) \cdot DN_F\mathcal{S}(in(x), \mathcal{D}(x)).\\
\null & \null & (D \mathcal{S}(in(x)\cdot D in (x)h, D \mathcal{D}(x) \cdot h)\\
\null & = & I(D N_F( \underline{x}, x'). (\underline{h}, h'))\\
\null & = & \int_0^T(D_2F(t, x_t,x'(t)) \cdot h_t + D_3 F(t, x_t,x'(t)) \cdot h'(t))dt.
\end{array}
\] 
\end{proof}
\section{Proof of the main result}
To abridge the writing, we write $D_2F[t] := D_2F(t, x_t, x'(t))$ and $D_3F[t] := D_3F(t, x_t, x'(t))$, and in the proofs we write $g(t, \theta) := \mathcal{R}_n(D_2F[t])(\theta)$.
\vskip1mm
\begin{lemma}\label{lem61} Under (A1, A2, A3),
for all $h \in \mathfrak{V}$, we have
\[
\left\{
\begin{array}{l}
\int_0^T D_2F[t] \cdot h_t dt = \\
\int_0^T \mathcal{R}_n(D_2F[t])(0) \cdot h(t) dt + \int_0^T \left( \int_{t}^{\min \{t+r, T \}} \mathcal{R}_n(D_2F[s](t-s)ds \right) \cdot h'(t) dt.
\end{array}
\right.
\]
\end{lemma}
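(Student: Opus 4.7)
The strategy is to start from the Riesz representation $(\ref{eq25})$, apply Riemann--Stieltjes integration by parts in $\theta$ to peel off a boundary term matching the first summand on the right-hand side, and then convert the resulting Lebesgue double integral involving $h'(t+\theta)$ into one involving $h'(t)$ via Fubini and the change of variables $s = t+\theta$.

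\emph{Integration by parts in $\theta$.} Since $h \in \mathfrak{V}$ satisfies $h_0 = 0$, the function $h$ vanishes identically on $[-r,0]$, so $h$ is absolutely continuous on $[-r,T]$ with $h' \equiv 0$ on $[-r,0)$. Combined with $g(t,\cdot) \in NBV([-r,0], \R^{n*})$ (which gives $g(t,-r) = 0$), the standard Riemann--Stieltjes integration-by-parts formula, applied componentwise using $(\ref{eq21})$, yields
\[
\int_{-r}^0 d_\theta g(t,\theta) \cdot h(t+\theta) \;=\; g(t,0) \cdot h(t) \;-\; \int_{-r}^0 g(t,\theta) \cdot h'(t+\theta)\, d\theta.
\]
Integrating in $t \in [0,T]$, the boundary term becomes $\int_0^T \mathcal{R}_n(D_2F[t])(0) \cdot h(t)\, dt$, matching the first summand on the right.

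\emph{Fubini and changes of variable.} For the remaining term $\int_0^T \int_{-r}^0 g(t,\theta) \cdot h'(t+\theta)\, d\theta\, dt$, Theorem~\ref{th22}(ii) provides Lebesgue measurability of $g$ on $[0,T] \times [-r,0]$, and continuity of $t \mapsto \mathcal{R}_n(D_2F[t])$ in the $NBV$-norm (Lemma~\ref{lem21}) bounds $g$ uniformly via $|g(t,\theta)| \le V_{-r}^0(g(t,\cdot))$; together with continuity of $h'$, this justifies Fubini. After exchanging orders and substituting $s = t+\theta$ in the inner $t$-integral for each fixed $\theta$ (clipping the lower bound to $0$ since $h'(s) = 0$ for $s < 0$), the integral becomes
\[
\int_{-r}^0 \int_0^{T+\theta} g(s-\theta,\theta) \cdot h'(s)\, ds\, d\theta.
\]
A second Fubini swap onto the region $\{(\theta,s) : \theta \in [-r,0],\, s \in [0,T+\theta]\}$---which for fixed $s \in [0,T]$ projects onto $\theta \in [\max\{-r, s-T\}, 0]$---followed by the substitution $\tau = s - \theta$ (whose bounds become $\tau \in [s, \min\{s+r, T\}]$) and the relabeling $s \to t$, $\tau \to s$, produces precisely the inner integral $\int_t^{\min\{t+r,T\}} \mathcal{R}_n(D_2F[s])(t-s)\, ds$ paired with $h'(t)$.

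\emph{Main obstacle.} The principal technical challenge is the bookkeeping through the two Fubini exchanges and the two changes of variable, especially the asymmetric lower endpoint $\max\{-r, s-T\}$ that, after the substitution $\tau = s-\theta$, produces the upper bound $\min\{t+r, T\}$ characteristic of the advance term. The hypothesis $h_0 = 0$ plays a dual role: it eliminates the $h(t-r) g(t,-r)$ boundary contribution in the IBP (where $g(t,-r)=0$ already suffices, but vanishing of $h(t-r)$ for $t < r$ makes the analogous truncation trivial), and it permits truncating $\int_\theta^{T+\theta}$ to $\int_0^{T+\theta}$ after the first change of variable.
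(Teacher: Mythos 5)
Your proof is correct and follows essentially the same route as the paper's: a Stieltjes integration by parts exploiting $g(t,-r)=0$ and $h \equiv 0$ on $[-r,0]$, followed by Fubini and the change of variables $s = t+\theta$ to produce the advance term --- the paper merely performs the substitution $\xi = t+\theta$ inside the Stieltjes integral before integrating by parts and then slices the region $A=\{(t,\xi): 0\le t\le T,\ t-r\le\xi\le t\}$ by $\xi$, which is the same bookkeeping you carry out via two Fubini swaps. One remark: both your computation and the paper's own display (\ref{eq61}) produce a \emph{minus} sign in front of the advance term, whereas the statement of the lemma shows a plus; this discrepancy is present in the paper itself and you have silently inherited it.
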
 
\begin{proof}
Using Proposition 3.2 in \cite{BK} and $g(t, -r) = 0$, we have, for all $t \in [0,T]$, 
\[
\begin{array}{rcl}
D_2F[t] \cdot h_t  & = & \int_{-r}^0 d_{\theta} g(t,\theta) \cdot h(t+ \theta)\\
\null &=& \int_{t-r}^t d_{\xi} g(t, \xi -t) \cdot h(\xi)\\
\null &=& g(t,0) \cdot h(t)  - \int_{t-r}^t g(t, \xi -t) \cdot h'(\xi) d \xi,
\end{array}
\]
which implies 
\begin{equation}\label{eq61}
\int_0^T D_2F[t] \cdot h_t dt = \int_0^T g(t,0) \cdot h(t) dt - \int_0^T \int_{t-r}^t g(t, \xi -t) \cdot h'(\xi) d \xi dt.
\end{equation}
We set $A := \{ (t,\xi) : 0 \leq t \leq T, t-r \leq \xi \leq t \}$ and from the Fubini-Tonelli theorem we have 
\begin{equation}\label{eq62}
\int_0^T \int_{t-r}^t g(t, \xi -t) \cdot h'(\xi) d \xi dt = \int \int_A g(t, \xi -t) d \xi dt
\end{equation}
\vskip2mm
\noindent
For each $\xi$, we consider $A_{., \xi} := \{ t \in [0,T] : (t, \xi) \in A \}$. We have 
$$
A_{., \xi} = \left\{
\begin{array}{cccl}
\null & [0, \xi + r] & {\rm if} & \xi \in [-r,0] \\ 
\null & [\xi, \xi + r] & {\rm if} & \xi \in [0, T-r] \\
\null & [\xi, T] & {\rm if} & \xi \in [T-r, T].
\end{array}
\right.
$$
Using the Fubini theorem, we obtain
\begin{equation}\label{eq63}
\left.
\begin{array}{ccl}
\int \int_A g(t, \xi -t) h'(\xi) d \xi & = & \int_{-r}^T ( \int_{A_{., \xi}}  g(t, \xi -t) h'(\xi)dt) d \xi\\
\null & = & \int_{-r}^0 ( \int_{A_{., \xi}}  g(t, \xi -t) h'(\xi)dt) d \xi \\
\null & \null & +\int_0^{T-r} ( \int_{A_{., \xi}}  g(t, \xi -t) h'(\xi)dt) d \xi \\
\null & \null &+  \int_{T-r}^T  ( \int_{A_{., \xi}}  g(t, \xi -t) h'(\xi)dt) d \xi.
\end{array}
\right\}
\end{equation}
For the first term of (\ref{eq63}), since $h$ is equal to zero on $[-r,0]$, we have $h'$ equal to zero on $[-r,0]$ and consequently we obtain
\begin{equation}\label{eq64}
 \int_{-r}^0 ( \int_{A_{., \xi}}  g(t, \xi -t) h'(\xi)dt) d \xi = 0.
\end{equation}
For the second term of (\ref{eq63}), we have
$$
\int_0^{T-r} ( \int_{A_{., \xi}}  g(t, \xi -t) h'(\xi)dt) d \xi = \int_0^{T-r} ( \int_{\xi}^{\xi + r} g(t, \xi -t) h'(\xi)dt) d \xi$$
 and replacing $\xi$ by $t$ and $t$ by $s$ we obtain
\begin{equation}\label{eq65}
\int_0^{T-r} ( \int_{A_{., \xi}}  g(t, \xi -t) h'(\xi)dt) d \xi = \int_0^{T-r} ( \int_t^{t+r} g(s, t-s)ds) \cdot h'(t)dt.
\end{equation}
For the third term of (\ref{eq63}) we have
\[
\begin{array}{crcl}
\null & \int_{T-r}^T ( \int_{A_{.,s}} g(t, s-t) \cdot h'(s) dt) ds &=& \int_{T-r}^T ( \int_{A_{.,s}} g(t, s-t)dt) \cdot h'(s) ds\\
=&\int_{T-r}^T ( \int_s^T g(t, s-t)dt) \cdot h'(s) ds & = & \int_{T-r}^T ( \int_s^T g(\alpha, \beta - \alpha)d \alpha) \cdot h'(\beta) d \beta\
\end{array}
\]
which implies
\begin{equation}\label{eq66}
\int_{T-r}^T ( \int_{A_{.,s}} g(t, s-t) \cdot h'(s) dt) ds = \int_{T -r}^T ( \int_t^T g(s, t-s) ds ) \cdot h'(t) dt.
\end{equation}
Using (\ref{eq64}), (\ref{eq65}) and (\ref{eq66}) in (\ref{eq63}) we obtain  
\begin{equation}
\int_0^T ( \int_{t-r}^t g(t, s-t) \cdot h'(s) ds) dt = \int_O^T( \int_t^{\min \{ t+r, T \}} g(s, t-s) ds ) \cdot h'(t) dt.
\end{equation}
Using (\ref{eq66}) in (\ref{eq61}) we obtain the announced formula.
\end{proof}
We set 
\[
\left\{
\begin{array}{ccl}
p(t) & := &\mathcal{R}_n(D_2F(t, x_t, x'(t)))(0)\\
q(t) & := & D_3F(t, x_t, x'(t)) - \int_t^{\min \{ t+r, T \}} \mathcal{R}_n(D_2 F(s, x_s, x'(s)))(t-s) ds.
\end{array}
\right.
\]
We know that $x$ is a local minimizer of $J$ on the closed affine subset $\mathfrak{U}$, that $\mathfrak{V}$ is the tangent vector subspace of $\mathfrak{U}$ at $x$ after Lemma \ref{lem42}. From Lemma \ref{lem53}, we know that $J$ is of class $C^1$, and then, using a classical argument, we can assert that $DJ(x) \cdot h = 0$ for all $h \in \mathfrak{V}$. Using Lemma \ref{lem53}, we obtain

$$0 = D J(x) \cdot h = \int_o^T (p(t) \cdot h(t) + q(t) \cdot h'(t)) dt$$
and so, using Lemma \ref{lem25}, we obtain tat $q$ is $C^1$ on $[0,T]$ and that $q' = p$ which is the formula given in the statement of Theorem \ref{th31}. Hence Theorem \ref{th31} is proven.
\end{document}